\theoremstyle{plain}
\newtheorem{proposition}{Proposition}[section]
\newtheorem{example}[proposition]{Example}
\newtheorem{lemma}[proposition]{Lemma}
\newtheorem{remark}[proposition]{Remark}
\newtheorem{theorem}[proposition]{Theorem}
\numberwithin{equation}{section}
\begin{document}
\title[$q$-Fourier-Bessel series]{Uniform convergence of Fourier-Bessel
series on a q-linear grid}
\author{L. D. Abreu}
\urladdr{}
\thanks{}
\author{R. \'Alvarez-Nodarse}
\author{J.\ L.\ Cardoso}
\subjclass{42C10, 33D15}
\keywords{Hahn-Exton $q$-Bessel function, Third Jackson $q$-Bessel function,
q-Fourier series, Basic Fourier expansions, Uniform convergence, q-linear
grid}
\thanks{This paper is in final form and no version of it will be submitted
for publication elsewhere.}

\begin{abstract}
We study Fourier-Bessel series on a $q$-linear grid, defined as expansions
in complete $q$-orthogonal systems constructed with the third Jackson $q$%
-Bessel function, and obtain sufficient conditions for uniform convergence.
The convergence results are illustrated with specific examples of expansions
in $q$-Fourier-Bessel series.
\end{abstract}

\maketitle



\section{Introduction}

Based on the orthogonality relation
\begin{equation*}
\int_{0}^{1}J_{\nu }(j_{\nu m}t)J_{\nu }(j_{\nu n}t)dt=0,
\end{equation*}%
if $\,m\neq n\,$, where $\,J_{\nu }\,$ stands for the Bessel functions of
order $\,\nu \,$ and $\,j_{\nu n}\,$ is their $\,nth\,$ positive zero, a
theory of Fourier-Bessel series was developed \cite[XVIII]{Watson}, in a
close parallelism to the classical theory of Fourier series. G. H. Hardy
\cite{Hardy} proved that, within some boundaries, the Bessel functions are
the most general functions satisfying such an orthogonality
\textquotedblleft with respect to their own zeros\textquotedblright , giving
no space for generalizations of the theory of Fourier-Bessel series, in the
scope of Lebesgue measure.

However, for a certain $q$-analogue of the Bessel function, such an
extension is possible, when considering the proper measure. The third
Jackson $q$-Bessel function $\,J_{\nu }^{(3)}(z;q)\equiv J_{\nu }(z;q)\,$(we
drop the superscript for notational convenience) or, for some authors, the
Hahn-Exton $q$-Bessel function, is defined as
\begin{equation*}
J_{\nu }^{(3)}(z;q)\equiv J_{\nu }(z;q):=z^{\nu }\frac{(q^{\nu
+1};q)_{\infty }}{(q;q)_{\infty }}\sum\limits_{k=0}^{\infty }(-1)^{k}\frac{%
q^{\frac{k(k+1)}{2}}}{(q^{\nu +1};q)_{k}(q;q)_{k}}z^{2k}\text{,}
\end{equation*}%
where$\,\nu >-1\,$ is a real parameter. When $\,q\rightarrow 1^{-}\,$we
recover the Bessel function from $\,J_{\nu }(z;q)$, after a normalization.
It is a well known fact that this function satisfies the orthogonality
relation
\begin{equation}
\int_{0}^{1}xJ_{\nu }(j_{n\nu }qx;q^{2})J_{\nu }(j_{m\nu
}qx;q^{2})d_{q}x=\eta _{n,\nu }\delta _{n,m}\text{,}  \label{ort}
\end{equation}%
\begin{equation*}
\eta _{n,\nu }=\frac{q-1}{2}q^{\nu -1}J_{\nu +1}(qj_{n\nu };q^{2})J_{\nu
}^{\prime }(j_{n\nu };q^{2})\text{,}
\end{equation*}%
where $\,j_{n\nu }(q^{2})\equiv j_{n\nu }\,$ are the positive zeros of $%
\,J_{\nu }(z;q^{2})\,$ arranged in ascending order of magnitude, $\,j_{1\nu
}<j_{2\nu }<j_{3\nu }<\cdots \,$, and $\,d_{q}x\,$ stands for the measure of
the Jackson $q$-integral.

It is our purpose in this paper to develop a theory of $q$-Fourier-Bessel
series, based on the above orthogonality relation (\ref{ort}), on results
about the completeness of these system \cite{AB}, and on the localization of
the zeros $j_{n\nu }$ \cite{ABC}. Since it was proved \cite{A2}, under the
same general conditions imposed by Hardy, that the above orthogonality
relation characterizes the functions $J_{\nu }(z;q^{2})$, this is the most
general Fourier theory based on functions $q$-orthogonal with respect to
their own zeros. Another interest in such a $q$-Fourier-Bessel theory is the
existence of a $q$-analogue of the Hankel transform with an inversion
formula, introduced in \cite{KoorS}, whose kernel is the third Jackson $q$%
-Bessel function. Such a transform was instrumental in the sampling and
Paley Wiener type theory associated with the third Jackson $q$-Bessel
function \cite{A1,AJMAA2007}.

In the papers \cite{BC}, \cite{JLC} and \cite{JLC2}, a theory of Fourier
series on a $q$-linear grid was developed, using a $q$-analogue of the
exponential function and the corresponding $\,q$-trigonometric functions
introduced by Exton \cite{E}. This was motivated by Bustoz-Suslov
orthogonality and completeness results of $q$-quadratic Fourier series \cite{BS}.
Later a simple argument has been found to prove such completeness and
results \cite{BS}, which is likely to also adapt to prove the completeness
and orthogonality of $q$-linear systems, using the expansions from \cite%
{qlinearwave}.

In this work we will first prove that pointwise convergence associated with
orthogonal discrete systems always holds and we obtain sufficient conditions
on the function for uniform convergence of the corresponding $q$%
-Fourier-Bessel series. It should be emphasized that Ismail stimulated a
considerable research activity by conjecturing properties of the zeros of $q$%
-Bessel functions, confirmed in \cite{ABC} and \cite{Hay}. First, as
documented in \cite{BH}, the asymptotic expansion for the zeros of $q$%
-difference equations has been conjectured in a letter from Ismail to
Hayman. Then, in a preprint that circulated in the early 2000's \cite{Ismail}%
, Ismail conjectured properties of the positive zeros of $q$-Bessel
functions. Several results followed, among which we can single out \cite{BH}
and \cite{Hay}, the bounds for the zeros of the third Jackson $q$-Bessel
function \cite{ABC}, the asymptotic results of \cite{B} and the recent
improvement in the corresponding accuracy \cite[Prop. A.3]{SS2016}. All
these results are contributions to the intriguing topic of functions of
order zero, whose investigation started in Littlewood's PhD thesis,
published in \cite{Littlewood}. A new interesting direction is the study of
radii of starlikeness of functions of order zero \cite%
{Starlikeness1,Starlikeness2}. The notation $J_{\nu }^{(k)},k=1,2,3$, from
\cite{I1} and is used to distinguished the three $q$-analogues of the Bessel
function defined by Jackson. As it was suggested in the very beginning of
the introduction, since the only $q$-Bessel function to appear on the text
is $J_{\nu }^{(3)}$, we will drop the superscript for shortness of the
notation and simply write 
$J_{\nu }(z;q^{2})=J_{\nu }^{(3)}(z;q^{2})$. This function shows up
naturally in the study of the quantum group of plane motions \cite{Koelink}.
We will adhere to the notations of \cite{Ism} and \cite{qcalculus}.
Our $q$-Fourier series will be defined in terms of orthogonal sets of the
form
\begin{equation*}
u_{n}^{(\nu )}(x)=x^{\frac{1}{2}}J_{\nu }(j_{n\nu }qx;q^{2})\text{.}
\end{equation*}%
Systems of the above form were used to obtain sampling theorems in \cite{A1}%
. In this paper we will look in more detail to the $q$-Fourier series
expansions in $L_{q}^{2}[0,1]$ associated with a function $\,f\,$
\begin{equation}
S_{q}^{\nu }[f](x)=\sum_{k=1}^{\infty }a_{k}^{\nu }\left( f\right) x^{\frac{1%
}{2}}J_{\nu }(qj_{k\nu }x;q^{2})\text{.}  \label{qseries}
\end{equation}%
Since the measure of $L_{q}^{2}[0,1]$ is discrete, pointwise convergence is
an easy consequence of the completeness results of \cite{AB,AJPA}. We will
make some comments about this in section 5 of the paper. Our main result is
the following sufficient conditions for uniform convergence of (\ref{qseries}%
). We will use the notation $V_{q}^{+}\,=\,\left\{ q^{n}:\,n=0,1,2,\ldots
\right\} $.

\begin{theorem}
\label{uniform-convergence} If the function $\,f\,$ is $\,q$-linear H\"{o}%
lder of order $\,\alpha >1\,$ in $\,V_{q}^{+}\cup \left\{ q^{-1}\right\} \,$
and such that $\,t^{-\frac{3}{2}}f(t)\in L_{q}^{2}[0,1]\,$ and the limit $\,%
\displaystyle\lim_{x\rightarrow 0^{+}}f(x)=f(0^{+})$ is finite then, the
correspondent basic Fourier-Bessel series $\,S_{q}^{(\nu )}[f](x)\,$
converges uniformly to $\,f\,$ on $\,V_{q}^{+}\,$whenever $\nu >0\,$.
\end{theorem}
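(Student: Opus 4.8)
The plan is to reduce the theorem to an application of the Weierstrass $M$-test on $V_q^+$. Since the measure of $L_q^2[0,1]$ is discrete (every atom $q^n$, $n\ge 0$, carries positive mass) and the system $\{u_k^{(\nu)}\}_{k\ge 1}$ is complete in it by \cite{AB,AJPA}, and since $t^{-3/2}f\in L_q^2[0,1]$ forces $f\in L_q^2[0,1]$, the series $S_q^{(\nu)}[f]$ already converges pointwise to $f$ at every point of $V_q^+$; so it suffices to prove the uniform summability bound $\sum_{k\ge 1}|a_k^{(\nu)}(f)|\,\|u_k^{(\nu)}\|_{L^\infty(V_q^+)}<\infty$. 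Every point of $V_q^+$ has the form $x=q^m$ with $m\ge 0$, and $u_k^{(\nu)}(q^m)=q^{m/2}J_\nu(j_{k\nu}q^{m+1};q^2)$, so the first task is to prove $\|u_k^{(\nu)}\|_{L^\infty(V_q^+)}\le M_k$ with $M_k$ growing at most like a fixed power of $j_{k\nu}$. Here the localization of the zeros from \cite{ABC} is essential: feeding $j_{k\nu}\asymp q^{-k}$ into termwise estimates of the defining $q$-series of $J_\nu$ controls $J_\nu$ at the scaled nodes $j_{k\nu}q^{m+1}$, the regime $m$ large being handled by $J_\nu(z;q^2)\sim\mathrm{const}\cdot z^\nu$ near $0$; the hypothesis $\nu>0$ is what makes these estimates uniform in $k$, and one in fact expects $M_k$ to be bounded.

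The second and principal task is to show that the coefficients
\[
a_k^{(\nu)}(f)=\frac1{\eta_{k,\nu}}\int_0^1 f(x)\,x^{1/2}J_\nu(j_{k\nu}qx;q^2)\,d_qx
\]
decay geometrically. For the denominator I would derive a lower bound $|\eta_{k,\nu}|\ge c\,q^{\gamma k}$ from the explicit formula $\eta_{k,\nu}=\tfrac{q-1}2 q^{\nu-1}J_{\nu+1}(qj_{k\nu};q^2)J_\nu'(j_{k\nu};q^2)$ together with the estimates of \cite{ABC} for $J_{\nu+1}$ and $J_\nu'$ at the zeros. For the numerator I would use a $q$-summation by parts, i.e.\ a discrete $q$-Green identity built from the Hahn–Exton $q$-Bessel difference equation: writing $\mathcal{L}u_k^{(\nu)}=\lambda_k u_k^{(\nu)}$ with $\lambda_k\asymp j_{k\nu}^2$ for the associated $q$-Sturm–Liouville operator $\mathcal{L}$, one gets $\lambda_k\int_0^1 f\,u_k^{(\nu)}\,d_qx=\int_0^1(\mathcal{L}f)\,u_k^{(\nu)}\,d_qx+(\text{boundary terms})$. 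The boundary term at $x=1$ involves $f(q^{-1})$ — this is precisely why the hypothesis puts $q^{-1}$ in the domain of the Hölder condition — while the one at $x=0$ is killed by $f(0^+)$ being finite (and hence, by $t^{-3/2}f\in L_q^2[0,1]$, equal to $0$); moreover $t^{-3/2}f\in L_q^2[0,1]$ is exactly what keeps $\mathcal{L}f$, which divides by $x^2$, inside $L_q^2[0,1]$, so Bessel's inequality applied to $\mathcal{L}f$ already gives $\sum_k\lambda_k^2|\eta_{k,\nu}|\,|a_k^{(\nu)}(f)|^2<\infty$. Iterating the identity once more against $D_qf$ and invoking the $q$-linear Hölder bound of order $\alpha$ gains the additional factor $j_{k\nu}^{-2\alpha}$, so that altogether $|a_k^{(\nu)}(f)|\lesssim j_{k\nu}^{-\alpha-s}$ for some $s>0$.

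Combining the two tasks, $\sum_k|a_k^{(\nu)}(f)|\,\|u_k^{(\nu)}\|_{L^\infty(V_q^+)}\lesssim\sum_k j_{k\nu}^{-\alpha-s+\beta}$, which converges because $j_{k\nu}^{-1}\asymp q^{k}\to0$ geometrically and, with $\alpha>1$, there is ample room to absorb the power $\beta$ coming from $\|u_k^{(\nu)}\|_\infty$ and $1/\eta_{k,\nu}$; the $M$-test then delivers uniform convergence on $V_q^+$, and since the sum equals $f$ pointwise, we are done. The main obstacle I anticipate is the bookkeeping of the exact powers of $q$ (equivalently of $j_{k\nu}$) through the $q$-Green identity and the estimates of $\eta_{k,\nu}$ and $\|u_k^{(\nu)}\|_\infty$, and in particular verifying that $\nu>0$, rather than merely $\nu>-1$, is the threshold that makes $\|u_k^{(\nu)}\|_{L^\infty(V_q^+)}$ uniformly bounded in $k$; a secondary difficulty is the careful treatment of the boundary contributions at $x=1$ and at $x=0$.
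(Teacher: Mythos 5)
Your overall strategy is the same as the paper's: pointwise convergence from completeness, a double $q$-integration by parts (equivalently one application of the $q$-Sturm--Liouville Green identity) producing the factor $j_{k\nu}^{-2}$ and the boundary term $f(q^{-1})$, Cauchy--Schwarz against the hypotheses $t^{-3/2}f\in L_q^2[0,1]$ and the $q$-linear H\"older condition with $\alpha>1$, uniform boundedness of $J_{\nu}\big(q^{1+n}j_{k\nu};q^2\big)$ for $\nu>0$, and the Weierstrass $M$-test. Two points, however, are genuine gaps. First, your factorization $\sum_k|a_k^{(\nu)}(f)|\,\sup_n|J_{\nu}(q^{1+n}j_{k\nu};q^2)|$ forces you to prove geometric decay of $|a_k^{(\nu)}(f)|$ alone, and after Cauchy--Schwarz this leaves an uncancelled factor $\eta_{k,\nu}^{-1/2}$, i.e.\ you need the lower bound $|\eta_{k,\nu}|\ge c\,q^{\gamma k}$ you assert, which amounts to a \emph{lower} bound on $|J_{\nu+1}(qj_{k\nu};q^2)|$ (equivalently on $|J_{\nu}(qj_{k\nu};q^2)|$) at the scaled zeros. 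The references you invoke give the localization of the zeros and \emph{upper} bounds for $J_\nu$ at $qj_{k\nu}$ (Theorem C); no quantitative lower bound is available there, and the paper never proves one. The paper sidesteps this by writing $J_{\nu}\big(q^{1+n}j_{k\nu};q^2\big)=J_{\nu}\big(qj_{k\nu};q^2\big)P_n\big(j_{k\nu}^2;q\big)$ (Proposition \ref{prop-inicial}), so that the dangerous factor $|J_{\nu}(qj_{k\nu};q^2)|^{-1/2}$ coming from $\eta_{k,\nu}^{-1/2}$ is cancelled by the same quantity sitting inside $J_{\nu}\big(q^{1+n}j_{k\nu};q^2\big)$, leaving only $|J_{\nu}(qj_{k\nu};q^2)|^{+1/2}|P_n(j_{k\nu}^2;q)|$, to which the upper bound of Theorem C and the explicit control of the polynomial coefficients $a_j^{(n,\nu)}$ apply. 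Without either that cancellation or a proved lower bound on $\eta_{k,\nu}$, your estimate of the second term does not close.

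Second, the claim that iterating the identity against $D_qf$ and invoking the H\"older bound ``gains the additional factor $j_{k\nu}^{-2\alpha}$'' is not correct: the $q$-linear H\"older condition of order $\alpha$ controls $|f(q^{n-1})-f(q^n)|\le Mq^{\alpha n}$, a regularity condition along the grid near $0$; in the actual argument its only role is to make $\int_0^1 t^{-3}\big(f(qt)-f(t)\big)^2d_qt$ finite (which is exactly where $\alpha>1$ is used), and it contributes a $k$-independent constant, not a power of $j_{k\nu}$. The geometric decay of the general term comes instead from the interplay of $j_{k\nu}^{-2}\asymp q^{2k}$ with the growth rates of $|J_{\nu}'(j_{k\nu};q^2)|$ (Lemma \ref{derivative}) and the bound on $P_n$, and verifying that the resulting exponent $k^2+\nu k+n(n+1-\nu)-2kn$ is bounded below by $k$ plus a constant uniformly in $n$ is precisely the bookkeeping your sketch defers; it is not ``ample room'' but an exact completion of the square.
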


The paper is organized as follows. In the next section, we collect the main
definitions and preliminary results. The third section is devoted to the
evaluation of a few finite sum. The fourth section contains a brief
introduction to the $q$-Fourier-Bessel series and the fifth section
discusses pointwise convergence for systems associated with discrete\
orthogonality relations. We prove our main result in section 6, starting
with some auxiliary Lemmas, including estimates for the coefficients of
basic Fourier-Bessel series. In the last section of the paper, two examples
of basic Fourier-Bessel expansions are provided.

\section{Definitions and preliminary results}

Following the standard notations in \cite{GR}, consider $0<q<1$, the $q$%
-shifted factorial for a finite positive integer$\,n\,$is defined as
\begin{equation*}
(a;q)_{n}=\left( 1-q\right) \left( 1-aq\right) \cdots \left(
1-aq^{n-1}\right) \,,\qquad (a;q)_{-n}=\frac{1}{\big(aq^{-n};q\big)_{n}}
\end{equation*}%
and the zero and infinite cases as%
\begin{equation*}
(a;q)_{0}=1\,,\qquad (a;q)_{\infty }=\lim\limits_{n\rightarrow \infty
}(a;q)_{n}\,\text{.}
\end{equation*}%
The symmetric $q$-difference operator acting on a suitable function $\,f\,$
is defined by
\begin{equation}
\delta _{q}f(x)=f(q^{1/2}x)-f(q^{-1/2}x)\,,
\label{symmetric-q-difference-operator}
\end{equation}%
hence, the symmetric $\,q$-derivative becomes
\begin{equation}
\frac{\delta _{q}f(x)}{\delta _{q}x}=\left\{
\begin{array}{ccc}
\displaystyle\frac{f(q^{\frac{1}{2}}x)-f(q^{-\frac{1}{2}}x)}{(q^{\frac{1}{2}
}-q^{-\frac{1}{2}})x} & \mbox{if} & x\neq 0\,, \\[1em]
f^{\prime }(0) & \mbox{if} & \;\:x=0\;\:\mbox{and}\;\:f^{\prime }(0)\;\:
\mbox{exists}\,.%
\end{array}%
\right.  \label{symmetric-q-derivative-operator}
\end{equation}%
On the opposite direction, the $q-$integral in the interval $\left(
0,a\right) $ is defined by
\begin{equation}
\int_{0}^{a}f\left( t\right) d_{q}t=\left( 1-q\right) \sum_{k=0}^{\infty
}f\left( aq^{k}\right) aq^{k}\,.  \label{qintegral}
\end{equation}%
Using this definition we may consider an inner product by setting%
\begin{equation*}
\langle f,g\rangle =\int_{0}^{1}f\left( t\right) \overline{g(t)}d_{q}t\,,
\end{equation*}%
The resulting Hilbert space is commonly denoted by $L_{q}^{2}(0,1)$.\ The
space $L_{q}^{2}(0,1)$ is a separable Hilbert space \cite{Ann}. For the
properties of the more general spaces $\,L_{q}^{p}(a,b)\,$ and $%
\,L_{q,\omega }^{p}(a,b)\,$, with $\,p\geq 1\,$, see \cite{CP}.

We will also need the following straightforward formula of $q$-integration
by parts \cite[Lemma 2, p. 5]{JLC3}, valid for $\:a,b\in\mathbb{R}\:$
assuming the involved limits exist:
\begin{equation}  \label{q-integration-by-parts}
\begin{array}{l}
\displaystyle\int_{a}^b g\big(q^{\pm\frac 1 2}x\big)\frac{\delta_q f(x)}{%
\delta_q x}d_qx\:=\: -\displaystyle\int_{a}^b f\big(q^{\mp\frac 1 2}x\big)%
\frac{\delta_q g(x)}{\delta_q x}d_qx\:+ \\[1.4em]
\hspace{1em}\displaystyle q^{\frac 1 2}\left\{\left[(fg)(bq^{-\frac 1
2})-(fg)(aq^{-\frac 1 2})\right] -\displaystyle\left[\lim_{n\rightarrow+%
\infty}(fg)(bq^{\frac 1 2+n})- \!\!\lim_{n\rightarrow+\infty}(fg)(aq^{\frac
1 2+n})\right]\right\}.%
\end{array}%
\end{equation}

The third Jackson $q$-Bessel function has a countable infinite number of
real and simple zeros, as it was shown in \cite{KS}. In \cite[Theorem 2.3]%
{ABC} it was proved that, when $\,q^{2\nu +2}<(1-q^{2})^{2}\,,$ the positive
zeros $\,j_{k\nu }=\omega _{k}^{(\nu )}(q^{2})\,$ of the function $\,J_{\nu
}(z;q^{2})\,$ satisfy
\begin{equation}
j_{k\nu }=q^{-k+\epsilon _{k}^{(\nu )}(q^{2})}  \label{zerosasymptotic}
\end{equation}%
with
\begin{equation}
0<\epsilon _{k}^{(\nu )}(q^{2})<\alpha _{k}^{(\nu )}(q^{2})\,,
\label{zerosbound1}
\end{equation}%
where
\begin{equation}
\alpha _{k}^{(\nu )}(q^{2})=\frac{\log {\left( 1-q^{2(k+\nu
)}/(1-q^{2k})\right) }}{2\log q}\,.  \label{zerosbound2}
\end{equation}%
Using Taylor expansion one proves that, as $\,k\rightarrow \infty \,,$
\begin{equation}
\alpha _{k}^{(\nu )}(q^{2})=\mathcal{O}\Big(q^{2k}\Big)\,.
\label{boundasymptotic}
\end{equation}%
%
%
%
%
%
Moreover \cite[Remark 2.5, page 4247]{ABC}, the above restriction on $\,q\,$
can be dropped if $\,k\,$ is chosen large enough. This is a consequence of
the fact that (\ref{zerosasymptotic})-(\ref{zerosbound2}) remain valid for
every $\,k\geq k_{0}\,$ if $\,q^{2(k_{0}+\nu )}\leq \big(1-q^{2}\big)\big(%
1-q^{2k_{0}}\big)\,.$ Hence, the following theorem holds.

\noindent\textbf{Theorem A} \emph{For every }$\,q\!\in \,]0,1[\,,$\emph{\ }$%
\,k_{0}\in N\,$\emph{\ exists such that, if }$\,k\geq k_{0}\,$\emph{\ then }%
\begin{equation*}
j_{k\nu }=q^{-k+\epsilon _{k}^{(\nu )}(q^{2})}\,,
\end{equation*}

\emph{where }$\,0<\epsilon _{k}^{(\nu )}(q^{2})<\alpha _{k}^{(\nu
)}(q^{2})\, $\emph{\ and }$\,\alpha _{k}^{(\nu )}(q^{2})\,$\emph{\ is given
by (\ref{zerosbound2}).}\

\vspace{0.7em} \noindent From now on, in order to simplify the notation, we
will consider $\,\varepsilon _{k}^{(\nu )}=\varepsilon _{k}^{(\nu )}\big(%
q^{2}\big)\,.$

\vspace{0.7em} We will refer often to the following theorem from \cite{AB}:

\noindent\textbf{Theorem B} \emph{The orthonormal sequence} $%
\,\{u_{k}\}_{k\geq 1}\,$ \emph{defined by}
\begin{equation*}
u_{k}^{(\nu)}(x)=\frac{x^{\frac{1}{2}}J_{\nu }(j_{k\nu }qx;q^{2})}{%
\left\Vert x^{\frac{1}{2}}J_{\nu }(j_{k\nu }qx;q^{2})\right\Vert }
\end{equation*}
\emph{is complete in} $\,L_{q}^{2}(0,1)$.

\vspace{0.5em} \noindent This means that, whenever a function $\,f\,$ is in $%
\,L_{q}^{2}(0,1)\,,$ if $\;\int_{0}^{1}f(x)u_{k}(x)d_{q}x=0\,,$ $%
\,k=1,2,3,\ldots \,,$ then $\,f\big(q^{k}\big)=0\,,$ $\,k=0,1,2,\ldots \,.$

The following theorem was recently proved \cite{JLC4} and will be of
fundamental importance to obtain sufficient conditions for the uniform
convergence of the basic Fourier-Bessel series.

\vspace{0.7em} \noindent\textbf{Theorem C} \emph{For large values of} $\,k\,$%
,
\begin{equation*}
\left|J_{\nu}\big(qj_{k\nu};q^2\big)\right|\leq \frac{\left(-q^2,-q^{2(%
\nu+1)};q^2\right)_{\infty}}{\left(q^2;q^2\right)_{\infty}}
q^{(k+\nu)(k-1)}\,.
\end{equation*}

\vspace{1em}

\section{\textbf{Identities for finite sums in $q$-calculus}}

In this section we intent to present some identities that will be needed in
other sections, but first, we refer the following identity
\begin{equation}  \label{symmetry}
\frac{\big(aq^{m};q\big)_k}{(a;q)_k}= \frac{\big(aq^{k};q\big)_m}{(a;q)_m}\,,
\end{equation}
valid for any $\,a\neq q^{-j}\,$, $\,j=0,1,2,\ldots\,,$ and $\,m\,$ and $%
\,k\,$ nonnegative integers, which is a consequence of the following trivial
identity
\begin{equation*}
\,(a;q)_m\big(aq^m;q\big)_k=(a;q)_k\big(aq^k;q\big)_m
\end{equation*}
which holds for every $\,a\,$ and for every integers $\,m\,$ and $\,k\,$.

\vspace{0.3em} \noindent The following two propositions and of the first
lemma can be proved by induction.

\begin{proposition}
\label{finite-sum} For $\,i=0,1,2,\cdots\,,$
\begin{equation*}
\sum_{k=0}^{i}q^{k}\frac{\big(q^{j};q\big)_k}{(q;q)_k}= \frac{\big(q^{1+j};q%
\big)_i}{(q;q)_i}\,.
\end{equation*}
\end{proposition}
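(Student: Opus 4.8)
The plan is to argue by induction on $\,i\,$, as the authors suggest. For the base case $\,i=0\,$, both sides reduce to $\,1\,$ because $\,(a;q)_0=1\,$ for every $\,a\,$, so nothing needs to be checked. The whole content is in the inductive step and amounts to a single algebraic manipulation of $q$-shifted factorials.

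For the inductive step, assume the identity holds for some $\,i\geq 0\,$. Adding the term $\,k=i+1\,$ to the sum gives
\begin{equation*}
\sum_{k=0}^{i+1}q^{k}\frac{(q^{j};q)_k}{(q;q)_k}=\frac{(q^{1+j};q)_i}{(q;q)_i}+q^{i+1}\frac{(q^{j};q)_{i+1}}{(q;q)_{i+1}}\,,
\end{equation*}
and the goal is to show the right-hand side equals $\,\frac{(q^{1+j};q)_{i+1}}{(q;q)_{i+1}}\,$. The two facts I would use are $\,(q;q)_{i+1}=(q;q)_i(1-q^{i+1})\,$ and $\,(q^{j};q)_{i+1}=(1-q^{j})(q^{j+1};q)_i\,$. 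Pulling out the common factor $\,(q^{1+j};q)_i/(q;q)_{i+1}\,$, the bracketed remainder is $\,(1-q^{i+1})+q^{i+1}(1-q^{j})=1-q^{i+1+j}\,$, which recombines with $\,(q^{1+j};q)_i\,$ to give $\,(q^{1+j};q)_{i+1}\,$, completing the induction.

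A conceptually cleaner route, which I would at least mention, is to observe that the summand telescopes: using the same two factorizations one checks directly that, for $\,k\geq 1\,$,
\begin{equation*}
q^{k}\frac{(q^{j};q)_k}{(q;q)_k}=\frac{(q^{j+1};q)_k}{(q;q)_k}-\frac{(q^{j+1};q)_{k-1}}{(q;q)_{k-1}}\,,
\end{equation*}
while the $\,k=0\,$ term equals $\,1=(q^{j+1};q)_0/(q;q)_0\,$. Summing from $\,k=0\,$ to $\,i\,$ collapses at once to $\,(q^{1+j};q)_i/(q;q)_i\,$.

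There is no real obstacle here; the only care needed is bookkeeping of which factorial index picks up the extra factor, and noting that the identity is purely formal, so no restriction on $\,a=q^{j}\,$ or convergence issue arises beyond what makes the finite products well defined. I would write the induction as the main proof and add the telescoping identity as a remark explaining why it works.
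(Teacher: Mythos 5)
Your induction argument is correct and is exactly the route the paper indicates (the paper only remarks that Proposition \ref{finite-sum} "can be proved by induction" without giving details), and your algebra in the inductive step checks out. The telescoping identity you add as a remark is also valid and is a pleasant, slightly cleaner packaging of the same computation, but it does not constitute a genuinely different method.
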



\begin{proposition}
\label{lambda} For $\,\lambda=0,1,2,\cdots\,,$
\begin{equation*}
\sum_{k=0}^{i}q^{2k}\frac{\big(q^{j-1};q\big)_k}{(q;q)_k}\Big(%
q^{1+i+\lambda-k};q\Big)_1= (q;q)_1\frac{\big(q^{j+1};q\big)_i}{(q;q)_i}+ %
\big(q^{\lambda};q\big)_1q^{1+i}\frac{\big(q^{j};q\big)_i}{(q;q)_i}\,.
\end{equation*}
\end{proposition}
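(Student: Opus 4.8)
The plan is to strip off the parameter $\lambda$ and reduce the whole statement to Proposition~\ref{finite-sum}. Writing $\big(q^{1+i+\lambda-k};q\big)_1=1-q^{1+i+\lambda-k}$, the left-hand side of Proposition~\ref{lambda} becomes
\[
\sum_{k=0}^{i}q^{2k}\frac{\big(q^{j-1};q\big)_k}{(q;q)_k}\;-\;q^{1+i+\lambda}\sum_{k=0}^{i}q^{k}\frac{\big(q^{j-1};q\big)_k}{(q;q)_k}.
\]
By Proposition~\ref{finite-sum} with its parameter taken to be $j-1$, the second sum equals $\big(q^{j};q\big)_i/(q;q)_i$, so the left-hand side is $S_i-q^{1+i+\lambda}\big(q^{j};q\big)_i/(q;q)_i$, where $S_i:=\sum_{k=0}^{i}q^{2k}\big(q^{j-1};q\big)_k/(q;q)_k$ no longer depends on $\lambda$. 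Consequently the asserted identity is equivalent to the single $\lambda$-free evaluation
\begin{equation*}
S_i=(q;q)_1\,\frac{\big(q^{j+1};q\big)_i}{(q;q)_i}+q^{1+i}\,\frac{\big(q^{j};q\big)_i}{(q;q)_i},
\end{equation*}
because subtracting $q^{1+i+\lambda}\big(q^{j};q\big)_i/(q;q)_i$ from the right-hand side here and combining the two terms that carry $q^{1+i}$ into $\big(q^{\lambda};q\big)_1\,q^{1+i}\big(q^{j};q\big)_i/(q;q)_i$ reproduces exactly the right-hand side of Proposition~\ref{lambda}.

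It then remains to evaluate $S_i$, and here I would apply Proposition~\ref{finite-sum} a second time by means of a summation by parts. Proposition~\ref{finite-sum} with parameter $j-1$ says that the partial sums $T_k:=\big(q^{j};q\big)_k/(q;q)_k$ satisfy $T_k-T_{k-1}=q^{k}\big(q^{j-1};q\big)_k/(q;q)_k$ (with $T_{-1}:=0$), so
\[
S_i=\sum_{k=0}^{i}q^{k}\big(T_k-T_{k-1}\big)=q^{i}T_i+(1-q)\sum_{k=0}^{i-1}q^{k}T_k=q^{i}\,\frac{\big(q^{j};q\big)_i}{(q;q)_i}+(1-q)\,\frac{\big(q^{j+1};q\big)_{i-1}}{(q;q)_{i-1}},
\]
where the remaining sum is again of the shape covered by Proposition~\ref{finite-sum} (parameter $j$, upper limit $i-1$). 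A short simplification, using $\big(q^{j};q\big)_i=(1-q^{j})\big(q^{j+1};q\big)_{i-1}$ and $(q;q)_i=(1-q^{i})(q;q)_{i-1}$, turns this into $(q;q)_1\big(q^{j+1};q\big)_i/(q;q)_i+q^{1+i}\big(q^{j};q\big)_i/(q;q)_i$, which is the $\lambda$-free identity above. Alternatively --- and this is presumably what the comment preceding these statements has in mind --- the $\lambda$-free identity can be obtained directly by induction on $i$: the case $i=0$ is $1=(q;q)_1+q$, and the inductive step amounts to checking that its right-hand side increases by $q^{2(i+1)}\big(q^{j-1};q\big)_{i+1}/(q;q)_{i+1}$ when $i$ is replaced by $i+1$, which reduces to an elementary rational identity in $q^{i}$ and $q^{j}$ after clearing denominators.

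I do not expect any real obstacle here: the proof is essentially two applications of Proposition~\ref{finite-sum} tied together by a summation by parts (or, equivalently, a straightforward induction). The points that need care are the two parameter shifts in Proposition~\ref{finite-sum} ($j\mapsto j-1$, then $j\mapsto j$, with a change of upper limit), the boundary term $q^{i}T_i$ produced by the summation by parts, and --- in the inductive variant --- the slightly lengthy but routine algebra of the inductive step. The one genuinely useful observation is the one isolated in the first paragraph: after splitting $\big(q^{1+i+\lambda-k};q\big)_1$, the parameter $\lambda$ drops out, so the content of the proposition is the single $\lambda$-independent evaluation of $S_i$.
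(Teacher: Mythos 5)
Your argument is correct; I checked the reduction and the summation by parts in detail and both go through. It is, however, a different route from the paper's: the paper offers no written proof at all for Proposition~\ref{lambda}, merely asserting that it (like Proposition~\ref{finite-sum} and Lemma~\ref{finit-sum}) ``can be proved by induction,'' i.e.\ a direct induction on $i$ of the two-parameter statement. Your observation that expanding $\big(q^{1+i+\lambda-k};q\big)_1=1-q^{1+i+\lambda-k}$ makes the entire $\lambda$-dependence factor out as $-q^{1+i+\lambda}$ times the sum already evaluated in Proposition~\ref{finite-sum} (with $j\mapsto j-1$), exactly matching the term $-q^{1+i+\lambda}\big(q^{j};q\big)_i/(q;q)_i$ hidden in $\big(q^{\lambda};q\big)_1$ on the right, is the genuinely clarifying step: it shows the proposition is really a single $\lambda$-free identity dressed up with a parameter, which a blind induction on $i$ would obscure. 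The remaining evaluation of $S_i$ via Abel summation against the telescoping increments $T_k-T_{k-1}$ supplied by Proposition~\ref{finite-sum} is clean; the final simplification using $\big(q^{j};q\big)_i=(1-q^{j})\big(q^{j+1};q\big)_{i-1}$ and $(q;q)_i=(1-q^{i})(q;q)_{i-1}$ checks out. The only cosmetic point: the intermediate expression $(1-q)\big(q^{j+1};q\big)_{i-1}/(q;q)_{i-1}$ is written for general $i$ but at $i=0$ involves $(q;q)_{-1}$, which under the paper's convention is $1/(1;q)_1$ and hence undefined; you should either restrict the summation-by-parts step to $i\geq 1$ and dispose of $i=0$ by the direct check $1=(1-q)+q$ (which you in effect already do in the inductive variant), or note that the sum $\sum_{k=0}^{i-1}$ is empty there.
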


\noindent In the proof of the next lemma, it is used Proposition \ref{lambda}
with $\,\lambda=0\,$. 

\begin{lemma}
\label{finit-sum} For each fixed non-negative integer $\,i\,$, the identity
\begin{equation*}
\sum_{k=0}^{i}q^{2k}\frac{\big(q^{j-1};q\big)_k}{(q;q)_k}\left(\sum_{%
\lambda=0}^{n}q^{\lambda} \frac{\big(q^{j+i};q\big)_{\lambda}}{\big(q^{1+i};q%
\big)_{\lambda}} \frac{\big(q^{1+i+\lambda-k};q\big)_1}{(q^{1+i+\lambda};q)_1%
}\right)= \frac{\big(q^{1+j};q\big)_{n+i}}{(q;q)_{n+i}}\frac{\big(q^{1+n};q%
\big)_1}{\big(q^{1+n+i};q\big)_1}
\end{equation*}
holds for $\,n=0,1,2,\cdots\,.$
\end{lemma}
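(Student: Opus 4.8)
\emph{Proof plan.} The plan is to fix the non-negative integer $i$ and argue by induction on $n$. Denote the left- and right-hand sides of the claimed identity by $L_n$ and $R_n$, so that $R_n=\frac{(q^{1+j};q)_{n+i}}{(q;q)_{n+i}}\,\frac{1-q^{1+n}}{1-q^{1+n+i}}$. For $n=0$ the inner sum over $\lambda$ collapses to its single term $\lambda=0$, giving $L_0=\frac{1}{(q^{1+i};q)_1}\sum_{k=0}^{i}q^{2k}\frac{(q^{j-1};q)_k}{(q;q)_k}(q^{1+i-k};q)_1$. Since $(q^{0};q)_1=1-1=0$, Proposition \ref{lambda} specialized to $\lambda=0$ reads $\sum_{k=0}^{i}q^{2k}\frac{(q^{j-1};q)_k}{(q;q)_k}(q^{1+i-k};q)_1=(q;q)_1\frac{(q^{j+1};q)_i}{(q;q)_i}$, whence $L_0=\frac{1-q}{1-q^{1+i}}\frac{(q^{j+1};q)_i}{(q;q)_i}=R_0$, settling the base case.

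For the inductive step, observe that $L_{n+1}-L_n$ is exactly the contribution of $\lambda=n+1$ to the sum defining $L_{n+1}$, namely
$$L_{n+1}-L_n=q^{n+1}\,\frac{(q^{j+i};q)_{n+1}}{(q^{1+i};q)_{n+1}}\,\frac{1}{1-q^{2+i+n}}\sum_{k=0}^{i}q^{2k}\frac{(q^{j-1};q)_k}{(q;q)_k}\big(q^{2+i+n-k};q\big)_1.$$
I would now apply Proposition \ref{lambda} to the inner $k$-sum with parameter $n+1$, and then, using the elementary relations $(a;q)_{m+1}=(1-aq^{m})(a;q)_m$, $(q^{1+j};q)_{n+i}=(q^{1+j};q)_i\,(q^{1+j+i};q)_n$, $(q;q)_{n+i}=(q;q)_i\,(q^{1+i};q)_n$ and $(q^{j};q)_i=\frac{1-q^j}{1-q^{i+j}}(q^{1+j};q)_i$, factor out of both $L_{n+1}-L_n$ and $R_{n+1}-R_n$ the common quantity
$$\frac{(q^{1+j};q)_i}{(q;q)_i}\cdot\frac{(q^{1+j+i};q)_n}{(q^{1+i};q)_n\,(1-q^{1+n+i})(1-q^{2+n+i})}.$$
After this reduction the required equality $L_{n+1}-L_n=R_{n+1}-R_n$ is equivalent to the single polynomial identity
$$q^{n+1}\big[(1-q)(1-q^{i+j})+q^{1+i}(1-q^{n+1})(1-q^j)\big]=(1-q^{1+j+n+i})(1-q^{2+n})-(1-q^{1+n})(1-q^{2+n+i}),$$
which is verified by expanding both sides into monomials in $q$; this closes the induction.

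The only genuine labour here is the $q$-Pochhammer bookkeeping needed to bring $L_{n+1}-L_n$ and $R_{n+1}-R_n$ to the above common form; once the common factor is exhibited the residual identity is a routine comparison of finitely many powers of $q$, so I anticipate no real obstacle beyond care with exponents and indices. (Since $j$ enters only through $q^{j}$ and the final reduced identity is polynomial, the statement holds for all $j$; the auxiliary step $(q^{j};q)_i=\frac{1-q^j}{1-q^{i+j}}(q^{1+j};q)_i$ is used for $j$ with $q^{i+j}\neq 1$, and the conclusion extends by continuity.)
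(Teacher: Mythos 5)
Your proof is correct and follows essentially the route the paper indicates (the paper only states that the lemma "can be proved by induction" using Proposition \ref{lambda}): you induct on $n$, settle the base case via Proposition \ref{lambda} at $\lambda=0$, and handle the step by applying Proposition \ref{lambda} at $\lambda=n+1$ to the new $\lambda$-term. I checked the reduction and the final monomial identity $q^{n+1}\bigl[(1-q)(1-q^{i+j})+q^{1+i}(1-q^{n+1})(1-q^{j})\bigr]=(1-q^{1+j+n+i})(1-q^{2+n})-(1-q^{1+n})(1-q^{2+n+i})$, and both sides indeed expand to $q^{1+n}-q^{2+n}-q^{1+n+i+j}+q^{2+n+i}-q^{3+2n+i}+q^{3+2n+i+j}$, so the argument is complete.
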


\begin{remark}
We point out that, identity (\ref{symmetry}) enables one to rewrite the
previous results in an apparently different way: for instance, Proposition %
\ref{finite-sum} can look like
\begin{equation*}
\sum_{k=0}^{i}q^{k}\frac{\big(q^{1+k};q\big)_{j-1}}{(q;q)_{j-1}}= \frac{\big(%
q^{1+i};q\big)_j}{(q;q)_j}\,.
\end{equation*}
\end{remark}
Let us, now, consider the sum
\begin{equation*}
a_{0}^{(n,\nu )}:=q^{-n\nu }\sum_{i=0}^{n}\big(q^{2\nu }\big)^{i}\,\text{,}
\end{equation*}%
where $\,\nu \,$ is a fixed parameter. In section \ref{sec-uni-con} this
finite sum will appear in a natural way.

\noindent For $\,x\in \mathbb{R}\,$, we use the notation $\,[x]\,$ to denote the
greatest integer which does not exceed $\,x$.

\begin{lemma}
\label{product-coefficients-00} Given a sequence of numbers $\,\left\{
\gamma _{\lambda }\right\} $ then, for $\,m=0,1,2,\cdots \,,$
\begin{equation*}
\sum_{\lambda =0}^{m}a_{0}^{(\lambda ,\nu )}a_{0}^{(m-\lambda ,\nu )}\gamma
_{\lambda }=\sum_{\theta =0}^{\left[ \frac{m}{2}\right] }a_{0}^{(m-2\theta
,\nu )}\left( \sum_{\lambda =\theta }^{m-\theta }\gamma _{\lambda }\right)
\,.
\end{equation*}
\end{lemma}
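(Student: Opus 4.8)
The plan is to turn the quadratic expression $a_0^{(\lambda,\nu)}a_0^{(m-\lambda,\nu)}$ into a \emph{linear} combination of the $a_0^{(j,\nu)}$ and then swap the order of the two finite summations. First I would record the closed form obtained by summing the geometric series: writing $t=q^{\nu}$,
\[
a_0^{(n,\nu)}=q^{-n\nu}\sum_{i=0}^{n}q^{2\nu i}=\frac{q^{(n+1)\nu}-q^{-(n+1)\nu}}{q^{\nu}-q^{-\nu}}=\sum_{r=0}^{n}t^{\,n-2r}\,,
\]
so that $a_0^{(n,\nu)}=t^{n}+t^{n-2}+\dots+t^{-n}$ is a symmetric Laurent polynomial in $t$ (when $\nu>0$ one has $t\neq t^{-1}$, which is all that is needed here; the case $\nu=0$, where $a_0^{(n,0)}=n+1$, follows by continuity).

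The key step is the Clebsch--Gordan type multiplication rule
\[
a_0^{(\lambda,\nu)}\,a_0^{(\mu,\nu)}=\sum_{\theta=0}^{\min(\lambda,\mu)}a_0^{(\lambda+\mu-2\theta,\nu)}\,,\qquad \lambda,\mu\ge 0\,.
\]
I would prove it by a direct computation with the closed form above: multiplying both sides by $(q^{\nu}-q^{-\nu})^{2}$ makes each side a Laurent polynomial in $t$; the left side becomes $\bigl(t^{\lambda+1}-t^{-(\lambda+1)}\bigr)\bigl(t^{\mu+1}-t^{-(\mu+1)}\bigr)$, while the right side, after multiplying the inner sum by $q^{\nu}-q^{-\nu}$ twice and expanding, splits into two telescoping sums that collapse to $t^{\lambda+\mu+2}-t^{|\lambda-\mu|}$ and $t^{-(\lambda+\mu+2)}-t^{-|\lambda-\mu|}$; adding these and using $t^{-|\lambda-\mu|}=t^{\mu-\lambda}$ when $\lambda\le\mu$ (and symmetrically otherwise) matches the left side. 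An equivalent route is induction on $\min(\lambda,\mu)$, starting from the single relation $a_0^{(k,\nu)}a_0^{(1,\nu)}=a_0^{(k+1,\nu)}+a_0^{(k-1,\nu)}$ for $k\ge 1$, which is immediate from $a_0^{(1,\nu)}=t+t^{-1}$.

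Granting the multiplication rule with $\mu=m-\lambda$, namely
\[
a_0^{(\lambda,\nu)}a_0^{(m-\lambda,\nu)}=\sum_{\theta=0}^{\min(\lambda,\,m-\lambda)}a_0^{(m-2\theta,\nu)}\,,
\]
I would substitute it into the left-hand side of the Lemma and interchange the order of summation. For a fixed $\theta$, the constraints $0\le\lambda\le m$ and $0\le\theta\le\min(\lambda,m-\lambda)$ are equivalent to $\theta\le\lambda\le m-\theta$, which defines a non-empty range of $\lambda$ exactly when $0\le\theta\le[m/2]$; collecting the coefficient of each $a_0^{(m-2\theta,\nu)}$ then produces $\sum_{\lambda=\theta}^{m-\theta}\gamma_{\lambda}$, which is precisely the right-hand side. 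The only genuine work is the multiplication rule; I expect the bookkeeping in its telescoping proof — in particular keeping track of $|\lambda-\mu|$ versus $\min(\lambda,\mu)$ — to be the main, though entirely routine, obstacle, and the inductive alternative trades the telescoping for a slightly longer but fully mechanical argument.
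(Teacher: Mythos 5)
Your argument is correct and follows the same overall strategy as the paper's proof: both rest on linearizing the product, i.e.\ on the identity
\begin{equation*}
a_{0}^{(\lambda ,\nu )}a_{0}^{(\mu ,\nu )}=\sum_{\theta =0}^{\min (\lambda
,\mu )}a_{0}^{(\lambda +\mu -2\theta ,\nu )},
\end{equation*}
which, specialized to $\mu =m-\lambda $, is exactly the pair of relations (\ref{r1})--(\ref{r2}) of the paper (the two cases $\lambda \leq \lbrack m/2]$ and $\lambda >[m/2]$ correspond to $\min (\lambda ,m-\lambda )$ being $\lambda $ or $m-\lambda $), followed by the same interchange of the two finite sums with the correct index bookkeeping $\theta \leq \lambda \leq m-\theta $. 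The one genuine difference is how the linearization is proved: the paper merely invokes induction, whereas you derive it from the closed form $a_{0}^{(n,\nu )}=(t^{n+1}-t^{-(n+1)})/(t-t^{-1})$ with $t=q^{\nu }$ by telescoping (the Chebyshev/character product rule), handling the degenerate case $t=t^{-1}$ by continuity; this is a more explicit and arguably cleaner route to the same sub-identity, but the lemma's proof is otherwise the same.
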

\begin{proof}
Using induction, it is easy to
prove that, for $\,m=0,1,2,\cdots \,,$
\begin{equation}
a_{0}^{(\lambda ,\nu )}a_{0}^{(m-\lambda ,\nu )}=\sum_{\theta =0}^{\lambda
}a_{0}^{(m-2\theta ,\nu )}\quad \mbox{if}\qquad 0\leq \lambda \leq \left[
\frac{m}{2}\right] \,  \label{r1}
\end{equation}%
and, as a consequence,
\begin{equation}
a_{0}^{(\lambda ,\nu )}a_{0}^{(m-\lambda ,\nu )}=\sum_{\theta =0}^{m-\lambda
}a_{0}^{(m-2\theta ,\nu )}\quad \mbox{if}\qquad \left[ \frac{m}{2}\right]
+1\leq \lambda \leq m\,  \label{r2}
\end{equation}%
Then, writing
\begin{equation*}
\sum_{\lambda =0}^{m}a_{0}^{(\lambda ,\nu )}a_{0}^{(m-\lambda ,\nu )}\gamma
_{\lambda }=\sum_{\lambda =0}^{\left[ \frac{m}{2}\right] }a_{0}^{(\lambda
,\nu )}a_{0}^{(m-\lambda ,\nu )}\gamma _{\lambda }+\sum_{\lambda =\left[
\frac{m}{2}\right] +1}^{m}a_{0}^{(\lambda ,\nu )}a_{0}^{(m-\lambda ,\nu
)}\gamma _{\lambda }\text{\thinspace ,}
\end{equation*}%
using relations (\ref{r1}), (\ref{r2}) and properties of the sums, one
proves the following lemma, where, for $\,x\in \mathbb{R}\,$, $\,[x]\,$
denotes the greatest integer which do not exceed $\,x$.
\end{proof}


\section{\textbf{Fourier-Bessel Series on a }$q$-linear grid}

\noindent Using the orthogonal relation (\ref{ort}), we may consider the
Fourier Bessel series on a $q$-linear grid associated with $\,f\,$ as the sum%
\begin{equation*}
S_{q}^{\nu }[f](x)=\sum_{k=1}^{\infty }a_{k}^{\nu }\left( f\right) x^{\frac{1%
}{2}}J_{\nu }(qj_{k\nu }x;q^{2})\,,
\end{equation*}%
with the coefficients $\,a_{k}^{\nu }\,$ given by
\begin{equation*}
a_{k}^{\nu }\left( f\right) =\frac{1}{\eta _{k,\nu }}\int_{0}^{1}t^{\frac{1}{%
2}}f(t)J_{\nu }(qj_{k\nu }t;q^{2})d_{q}t\,,
\end{equation*}%
or, more conveniently,
\begin{equation}
S_{q}^{(\nu )}[f](x)=\sum_{k=1}^{\infty }a_{k}^{(\nu )}\left( f\right)
J_{\nu }(qj_{k\nu }x;q^{2})\,,  \label{fourier}
\end{equation}%
with the coefficients $\,a_{k}^{(\nu )}\,$ given by
\begin{equation}
a_{k}^{(\nu )}\left( f\right) =\frac{1}{\eta _{k,\nu }}\int_{0}^{1}tf(t)J_{%
\nu }(qj_{k\nu }t;q^{2})d_{q}t,  \label{cfourier}
\end{equation}%
where
\begin{equation}
\begin{array}{lll}
\eta _{k,\nu }=\displaystyle\int_{0}^{1}\left[ t^{\frac{1}{2}}J_{\nu }\big(%
qj_{k\nu }t;q^{2}\big)\right] ^{2}d_{q}t & = & -\frac{1-q}{2}q^{\nu
-1}J_{\nu +1}(qj_{k\nu };q^{2})J_{\nu }^{\prime }(j_{k\nu };q^{2}) \\[1em]
& = & -\displaystyle\frac{(1-q)q^{\nu -2}}{2j_{k\nu }}J_{\nu }(qj_{k\nu
};q^{2})J_{\nu }^{\prime }(j_{k\nu };q^{2})\,,%
\end{array}
\label{eta}
\end{equation}%
being the last equality valid by identity (vii)
\begin{equation}
J_{\nu }\big(qj_{k\nu };q^{2}\big)=qj_{k\nu }J_{\nu +1}\big(qj_{k\nu };q^{2}%
\big)  \label{relation-vii}
\end{equation}%
of \cite[Prop. 5, p. 8]{JLC3}.

\vspace{0.5em} \noindent With respect to the series (\ref{fourier}), our
goal is to establish pointwise convergence at each of the points $\,x \in
V_{q}^{+}=\left\{q^n:\:n=0,1,2,\cdots\right\}\,$ and to obtain sufficient
conditions for uniform convergence on $\,V_{q}^{+}$.

\section{Pointwise convergence}

\subsection{A general set-up}

\noindent With a view to study pointwise convergence of the series (\ref%
{fourier}) when $\,x\in V_{q}^{+}=\left\{ q^{n}:\,n=0,1,2,\cdots \right\} \,$%
, we first establish a general result concerning the pointwise convergence
of these series. The setting to be used in this section is a very general
one, designed to cover not only the convergence of $q$-Fourier-Bessel series
but also other Fourier systems based on discrete orthogonality relations, as
in \cite{BC}, \cite{JLC}, \cite{JLC2} and \cite{AnnMans}. There is no real
novelty in this section and we are aware that the pointwise convergence can
be extracted from the mean convergence by using known results from linear
analysis. However, we believe that the reader may benefit from the following
elegant self contained argument, which has been gently provided to us by
Professor Juan Arias de Reyna.

Let $\mathcal{N}=\{a_n\,|\, n\in{\mathbb{N}}\}$ be a numerable space and let
$\mu$ be a positive measure on $\mathcal{N}$ such that $\mu_n=\mu(\{a_n\})>0$%
. We will denote by $\mathcal{L}_{\mu }^{2}$, the space of all functions $f:%
\mathcal{N}\mapsto {\mathbb{C}}$, such that
\begin{equation*}
\Vert f\Vert _{\mathcal{L}_{\mu }^{2}}^{2}=\sum_{n=1}^{\infty
}|f(a_{n})|^{2}\mu _{n}<+\infty .
\end{equation*}%
In such a space, the scalar product $\langle f,g\rangle $ of two functions
is defined by
\begin{equation*}
\langle f,g\rangle_{\mu} =\sum_{n=1}^{\infty }f(a_{n})\overline{g(a_{n})}\mu
_{n}.
\end{equation*}
The sequence of functions $(e_{n})_{n\geq 1}$ defined on $\mathcal{N}$ by
\begin{equation}
e_{n}(a_{k})=\left\{
\begin{array}{ll}
\mu _{n}^{-1/2}, & k=n, \\[3mm]
0, & k\neq n.%
\end{array}%
\right.  \label{def-e_k}
\end{equation}
is a complete orthonormal system in $\mathcal{L}_{\mu }^{2}$. To check this
fact, notice that the function $g_{N}$, $N\in{\mathbb{N}}$, defined by
\begin{equation*}
g_{N}=f-\sum_{n=1}^{N}\langle f,e_{n}\rangle_{\mu} e_{n}\,,\qquad f\in
\mathcal{L}_{\mu}^{2}\,,
\end{equation*}%
is such that $g_{N}(a_{k})=0$ for all $k\leq N$ and $g_{N}(a_{k})=f(a_{k})$
for all $k>N$. Therefore,
\begin{equation*}
\Vert g_{N}\Vert _{\mathcal{L}_{\mu }^{2}}^{2}=\sum_{n=N+1}^{\infty
}|f(a_{n})|^{2}\mu _{n}\rightarrow 0,\,\,\,\mbox{as}\,\,\,N\rightarrow
\infty .
\end{equation*}%
Thus, for an arbitrary $f\in \mathcal{L}_{\mu }^{2}$ , we have
\begin{equation*}
f=\sum_{n=1}^{\infty }\langle f,e_{n}\rangle_{\mu} e_{n},
\end{equation*}
with convergence in norm $\Vert \cdot \Vert _{\mathcal{L}_{\mu }^{2}}^{2}$ .
This is also true for any other complete orthonormal system $(u_{n})_{n\geq
1}$, i.e., for an arbitrary $f\in \mathcal{L}_{\mu }^{2}$ one has
\begin{equation*}
f=\sum_{n=1}^{\infty }\langle f,u_{n}\rangle_{\mu} u_{n},
\end{equation*}
with convergence in norm $\Vert \cdot \Vert _{\mathcal{L}_{\mu }^{2}}^{2}$.
It remains only to check when the convergence of the above series is
pointwise. The answer to this question is in the following lemma.

\begin{lemma}
\label{juan} Let $(u_n)_{n\geq 1}$ be a complete orthonormal system in $%
\mathcal{L}^2_{\mu}$. Then for any arbitrary $f\in\mathcal{L}^2_{\mu}$
\begin{equation*}
f(a_k)=\lim_{N\to\infty}\sum_{n=1}^N \langle f, u_n\rangle u_n(a_k), \quad
\forall\,a_k\in\mathcal{N}.
\end{equation*}
\end{lemma}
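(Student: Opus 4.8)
The plan is to exploit the very special structure of the space $\mathcal{L}^2_\mu$: the delta-like functions $e_k$ from \eqref{def-e_k} span the space and any single point evaluation $f\mapsto f(a_k)$ is a bounded linear functional, since $|f(a_k)|^2 = \mu_k^{-1}\cdot|f(a_k)|^2\mu_k \le \mu_k^{-1}\|f\|^2_{\mathcal{L}^2_\mu}$. First I would observe that $f(a_k) = \mu_k^{1/2}\langle f, e_k\rangle_\mu$, so pointwise convergence at $a_k$ of the partial sums $S_N f = \sum_{n=1}^N \langle f,u_n\rangle_\mu u_n$ is equivalent to the convergence of $\langle S_N f, e_k\rangle_\mu$ to $\langle f, e_k\rangle_\mu$, i.e. to the \emph{weak} convergence of $S_N f$ against the fixed vector $e_k$.

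The key step is then immediate from what has already been established in the excerpt: for any complete orthonormal system $(u_n)$ and any $f\in\mathcal{L}^2_\mu$, one has $S_N f \to f$ in the norm $\|\cdot\|_{\mathcal{L}^2_\mu}$. Since norm convergence implies weak convergence, and $e_k\in\mathcal{L}^2_\mu$, we get
\begin{equation*}
\langle S_N f, e_k\rangle_\mu \;\longrightarrow\; \langle f, e_k\rangle_\mu
\qquad (N\to\infty).
\end{equation*}
Multiplying by $\mu_k^{1/2}$ and using $\langle g, e_k\rangle_\mu = \mu_k^{1/2} g(a_k)$ for every $g\in\mathcal{L}^2_\mu$ (in particular for $g=S_Nf$ and $g=f$) yields
\begin{equation*}
(S_N f)(a_k) = \mu_k^{-1/2}\langle S_N f, e_k\rangle_\mu \longrightarrow \mu_k^{-1/2}\langle f, e_k\rangle_\mu = f(a_k),
\end{equation*}
which is exactly the assertion $f(a_k) = \lim_{N\to\infty}\sum_{n=1}^N \langle f,u_n\rangle_\mu u_n(a_k)$.

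Alternatively — and perhaps cleaner to write — I would avoid weak convergence language and argue directly with Cauchy–Schwarz: since $|g(a_k)| \le \mu_k^{-1/2}\|g\|_{\mathcal{L}^2_\mu}$ for every $g\in\mathcal{L}^2_\mu$, applying this to $g = f - S_N f$ gives $|f(a_k) - (S_Nf)(a_k)| \le \mu_k^{-1/2}\|f - S_N f\|_{\mathcal{L}^2_\mu} \to 0$. There is no real obstacle here; the only thing that needs care is making sure the identity $\langle g, e_k\rangle_\mu = \mu_k^{1/2} g(a_k)$ (equivalently the pointwise bound $|g(a_k)|\le \mu_k^{-1/2}\|g\|_{\mathcal{L}^2_\mu}$) is stated cleanly, as it is the one place where the discreteness of the measure is used. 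Everything else is a direct consequence of the norm convergence of Fourier expansions in a separable Hilbert space with respect to a complete orthonormal system, which the excerpt has already spelled out.
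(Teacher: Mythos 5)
Your argument is correct and is essentially the paper's own proof: the paper introduces $d_k:=\mu_k^{-1/2}e_k$, notes $\langle f,d_k\rangle_\mu=f(a_k)$, and passes the limit through the (continuous) inner product, which is exactly your observation that point evaluation is a bounded functional so norm convergence of $S_Nf$ forces pointwise convergence. The only blemish is the typo $f(a_k)=\mu_k^{1/2}\langle f,e_k\rangle_\mu$ in your first paragraph, where the exponent should be $-1/2$ (as you in fact use correctly later).
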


\begin{proof}
Let $a_k$ be an arbitrary element of $\mathcal{N}$. Then, the function $%
d_k:=\mu_k^{-1/2}e_k$, where $e_k$ is the function given in (\ref{def-e_k}),
satisfies the property
\begin{equation*}
\langle f, d_k\rangle=\langle f, \mu_k^{-1/2}e_k\rangle=f(a_k)\mu_k^{-1/2}
e_k(a_k)\mu_k=f(a_k).
\end{equation*}
In particular, $\langle u_n, d_k \rangle=u_n(a_k)$. Then,
\begin{equation*}
f(a_k)=\langle f, d_k\rangle=\left\langle
\lim_{N\to\infty}\sum_{n=1}^N\langle f, u_n\rangle u_n, d_k \right\rangle=
\lim_{N\to\infty} \sum_{n=1}^N\langle f, u_n\rangle \langle u_n, d_k \rangle,
\end{equation*}
and, therefore,
\begin{equation*}
f(a_k)=\lim_{N\to\infty} \sum_{n=1}^N \langle f, u_n\rangle u_n(a_k).
\end{equation*}
\end{proof}

\subsection{Application to $q$-Fourier-Bessel series}

Let be $\,\mathcal{N}=V_{q}^{+}\,$ and in $\,V_{q}^{+}\,$ define the measure
$\,\mu\,$ associated to the Jackson $q$-integral (\ref{qintegral}). Let $%
\,L_{q}^{2}[0,1]\,$ be the corresponding $\,\mathcal{L}^{2}_\mu\,$ space.
Since the set of functions
\begin{equation*}
u_{n}^{(\nu)}(x)=\frac{x^{\frac{1}{2}}J_{\nu }(j_{n\nu }qx;q^{2})}{
\left\Vert x^{\frac{1}{2}}J_{\nu }(j_{n\nu
}qx;q^{2})\right\Vert_{L_{q}^{2}[0,1]}},
\end{equation*}
is a complete orthonormal system in $L_{q}^{2}[0,1]$, then, for an arbitrary
$f\in L_{q}^{2}[0,1]$, i.e., $f$ such that
\begin{equation*}
\int_{0}^{1}|f(x)|^{2}d_{q}x<+\infty ,
\end{equation*}%
we have the equality%
\begin{equation*}
f(q^{k})=\lim_{N\rightarrow \infty }\sum_{n=1}^{N}\langle f,u_{n}^{(\nu)
}\rangle u_{n}^{(\nu)}(q^{k}),\quad k=0,1,2,\ldots ,
\end{equation*}%
where
\begin{equation*}
\langle f,u_{n}^{(\nu)}\rangle =\int_{0}^{1}f(t)u_{n}^{(\nu)}(t)d_{q}t\,.
\end{equation*}%
This summarizes in the following theorem.

\begin{theorem}
If $f\in L_{q}^{2}[0,1]$, then the $q$-Fourier-Bessel series (\ref{fourier})
converges to the function $f$ at every point $x\in V_q^{+}$.
\end{theorem}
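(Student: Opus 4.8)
The statement follows at once from Lemma~\ref{juan} combined with the completeness assertion of Theorem~B; the only real work is bookkeeping, namely matching the normalised system $\{u_n^{(\nu)}\}$ against the coefficients $a_k^{(\nu)}(f)$ and the functions $J_\nu(qj_{k\nu}x;q^2)$ that actually occur in (\ref{fourier}). The plan is to take $\mathcal N=V_q^{+}$ equipped with the discrete measure $\mu$ of the Jackson $q$-integral, i.e. $\mu(\{q^{m}\})=(1-q)q^{m}>0$, so that $\mathcal L^2_\mu=L_q^2[0,1]$; by Theorem~B the family $\{u_n^{(\nu)}\}_{n\ge1}$ is a complete orthonormal system of $\mathcal L^2_\mu$, hence Lemma~\ref{juan} is applicable to it.

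First I would replace $f$ by $g(x):=x^{\frac12}f(x)$. Since $0\le x\le1$ on the support of $\mu$ one has $\|g\|\le\|f\|$, so $g\in L_q^2[0,1]$ whenever $f$ does, and Lemma~\ref{juan} gives
\begin{equation*}
g(q^{k})=\lim_{N\to\infty}\sum_{n=1}^{N}\langle g,u_n^{(\nu)}\rangle\,u_n^{(\nu)}(q^{k}),\qquad k=0,1,2,\dots.
\end{equation*}
Next I would evaluate the summands. Writing $\eta_{n,\nu}=\big\|x^{\frac12}J_\nu(qj_{n\nu}x;q^2)\big\|^2>0$ as in (\ref{eta}), we have $u_n^{(\nu)}(x)=\eta_{n,\nu}^{-1/2}x^{\frac12}J_\nu(qj_{n\nu}x;q^2)$, and since $u_n^{(\nu)}$ is real on $V_q^{+}$,
\begin{equation*}
\langle g,u_n^{(\nu)}\rangle=\eta_{n,\nu}^{-1/2}\int_0^1 t\,f(t)\,J_\nu(qj_{n\nu}t;q^2)\,d_qt=\eta_{n,\nu}^{1/2}\,a_n^{(\nu)}(f)
\end{equation*}
by (\ref{cfourier}); consequently $\langle g,u_n^{(\nu)}\rangle\,u_n^{(\nu)}(q^{k})=a_n^{(\nu)}(f)\,(q^{k})^{\frac12}J_\nu(qj_{n\nu}q^{k};q^2)$.

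Substituting this together with $g(q^{k})=(q^{k})^{\frac12}f(q^{k})$ into the displayed limit and cancelling the common nonzero factor $(q^{k})^{\frac12}$ yields
\begin{equation*}
f(q^{k})=\lim_{N\to\infty}\sum_{n=1}^{N}a_n^{(\nu)}(f)\,J_\nu(qj_{n\nu}q^{k};q^2)=S_q^{(\nu)}[f](q^{k}),\qquad k=0,1,2,\dots,
\end{equation*}
which is exactly the convergence of (\ref{fourier}) at every point of $V_q^{+}$. I do not anticipate any genuine obstacle: all the analytic substance is already packaged in Lemma~\ref{juan} (evaluation at $q^{k}$ is a bounded linear functional on $\mathcal L^2_\mu$ precisely because $\mu_k>0$) and in the completeness of $\{u_n^{(\nu)}\}$ supplied by Theorem~B; what remains is only the elementary identity $\langle x^{\frac12}f,u_n^{(\nu)}\rangle\,u_n^{(\nu)}(x)=a_n^{(\nu)}(f)\,x^{\frac12}J_\nu(qj_{n\nu}x;q^2)$ and the observation that $x^{\frac12}$ is a bounded, nowhere-vanishing factor on $V_q^{+}$.
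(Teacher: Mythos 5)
Your proposal is correct and follows essentially the same route as the paper: both apply Lemma~\ref{juan} to the complete orthonormal system $\{u_n^{(\nu)}\}$ of Theorem~B in $\mathcal{L}^2_\mu=L_q^2[0,1]$ with the Jackson measure. In fact you are slightly more careful than the paper, which applies the lemma directly to $f$ and leaves implicit the bookkeeping with the weight $x^{\frac12}$ that converts the orthonormal expansion into the form (\ref{fourier})--(\ref{cfourier}); your substitution $g(x)=x^{\frac12}f(x)$ makes that identification explicit.
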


\begin{remark}
Let us mention that in the case of the standard trigonometric series the
equivalent result of Lemma \ref{juan} ($\mathcal{L}^2_{\mu}$ convergence
implies pointwise convergence) is not true. In fact this problem leads to
the celebrated Carleson Theorem (see e.g. \cite{Aria}). The main difference
between these two cases is that, contrary to the case of the discrete space $%
\mathcal{L}^2_\mu$ (see the function $d_k$ used in the proof of Lemma \ref%
{juan}), for functions $f\in\mathcal{L}^2([0,2\pi])$ and for every $a\in[%
0,2\pi]$, there not exists functions $f_a$ such that $\langle f, f_a\rangle
=f(a)$.
\end{remark}

\begin{remark}
In \cite{JLC}, some convergence theorems of $q$-Fourier series associated
with the $q$-trigonometric orthogonal system $\Big\{1,\,C_{q}\big(q^{\frac{1%
}{2}}\omega _{k}x\big),\,S_{q}\left( q\omega _{k}x\right) \Big\}$ were
established, where the $q$-cosines $C_{q}$ and $q$-sinus $S_{q}$ can be
defined in terms of the third $q$-Bessel functions by the identities
\begin{equation*}
C_{q}(z)=q^{-3/8}\frac{(q^{2};q^{2})_{\infty }}{(q;q^{2})_{\infty }}%
z^{1/2}J_{-1/2}\big(q^{-3/4}z;q^{2}\big),\qquad S_{q}(z)=q^{1/8}\frac{%
(q^{2};q^{2})_{\infty }}{(q;q^{2})_{\infty }}z^{1/2}J_{1/2}\big(%
q^{-1/4}z;q^{2}\big)\;,
\end{equation*}%
being $\left\{ \omega _{k}\right\} $ the sequence of positive zeros of the
function $S_{q}$, arranged in ascendant order of magnitude. Notice that,
since this orthogonal system is a complete system (\cite{BC}) in $%
L_{q}^{2}[-1,1]$, then one can derive in a similar way that the $q$%
-trigonometric Fourier series converge to $\,f\in L_{q}^{2}[-1,1]\,$ at
every point of $\,V_{q}\,=\,\left\{ \pm q^{n}:\,n=0,1,2,\ldots \right\} $,
i.e., for every fixed $\,x\in V_{q}\,$,
\begin{equation*}
f(x)=\frac{a_{0}}{2}+\sum_{k=1}^{\infty }\left\{ a_{k}C_{q}\big(q^{\frac{1}{2%
}}\omega _{k}x\big)+b_{k}S_{q}\big(q\omega _{k}x\big)\right\} ,
\end{equation*}%
with $a_{0}=\int_{-1}^{1}f(t)d_{q}t$ and, for $k=1,2,3,\ldots ,$
\begin{equation*}
a_{k}=\frac{1}{\tau _{k}}\int_{-1}^{1}f(t)C_{q}\big(q^{\frac{1}{2}}\omega
_{k}t\big)d_{q}t,\quad b_{k}=\frac{q^{\frac{1}{2}}}{\tau _{k}}%
\int_{-1}^{1}f(t)S_{q}\left( q\omega _{k}t\right) d_{q}t,
\end{equation*}%
where,
\begin{equation*}
\tau _{k}=(1-q)C_{q}(q^{1/2}\omega _{k})S_{q}^{\prime }(\omega _{k})\,.\,
\end{equation*}%
Thus, the corresponding open problem posed in the concluding remarks section
of \cite{JLC} is completely solved.
\end{remark}

\begin{remark}
In \cite{AnnMans} a rigorous theory of $q$-Sturm-Liouville systems was
developed. In particular it was shown that the set of all normalized
eigenfunctions forms an orthonormal basis for $L_{q}^{2}[0,a]$. Therefore
Lemma \ref{juan} can be used to show that the Fourier expansions in terms of
the eigenfunctions of $q$-Sturm-Liouville systems are pointwise convergent.
\end{remark}

\section{Uniform convergence}

\label{sec-uni-con}

By (\ref{fourier}) and (\ref{cfourier}) one may write, with $%
\,\eta_{k,\nu}\, $ given by (\ref{eta}),
\begin{equation}  \label{q-series}
\begin{array}{lll}
\displaystyle S_{q}^{(\nu)}[f](q^{n}) & = & \displaystyle\sum_{k=1}^{\infty}
a_{k}^{(\nu)}\left(f\right)J_{\nu }\big(q^{n+1}j_{k\nu };q^{2}\big) \\%
[1.2em]
& = & \displaystyle\sum_{k=1}^{\infty }\left(\frac{1}{\eta_{k,\nu}}%
\int_{0}^{1} tf(t)J_{\nu }\big(qj_{k\nu }t;q^{2}\big)d_{q}t\right) J_{\nu}%
\big(q^{n+1}j_{k\nu };q^{2}\big).%
\end{array}%
\end{equation}

\subsection{Behavior of $\,J_{\protect\nu}\big(q^{n+1}j_{k\protect\nu %
};q^{2} \big)$}

The study of the factor $\,J_{\nu}\big(q^{n+1}j_{k\nu };q^{2}\big)\,$ will
be crucial. We begin with the basic difference relation (2.12) of \cite[p.
693]{KS}, make the shift $\,q\to q^2\,$,
\begin{equation*}
J_{\nu}\big(q^2x;q^{2}\big)+q^{-\nu}\big(q^2x^2-1-q^{2\nu}\big)J_{\nu}\big(%
qx;q^{2}\big)+ J_{\nu}\big(x;q^{2}\big)=0\,,
\end{equation*}
and then use induction on $\,n\,$ to prove the following proposition.

\begin{proposition}
\label{prop-inicial}For $n=0,1,2,\cdots $,
\begin{equation*}
J_{\nu }\big(q^{n+1}j_{k\nu };q^{2}\big)=J_{\nu }\big(qj_{k\nu };q^{2}\big)%
P_{n}\big(j_{k\nu }^{2};q\big)\quad ,\qquad n=0,1,2,\cdots
\end{equation*}%
where $\,\left\{ P_{n}(x;q)\right\} _{n}\,$ is a sequence of polynomials
such that, for each $\,n=0,1,2,\cdots \,,$ $\,P_{n}(x;q)\,$ has degree $%
\,n\, $ in the variable $\,x\,$ and
\begin{equation*}
\left\{
\begin{array}{l}
P_{n+1}\big(j_{k\nu }^{2};q\big)=\Big\{\big(q^{\nu }+q^{-\nu }\big)-q^{-\nu
+2(n+1)}j_{k\nu }^{2}\Big\}P_{n}\big(j_{k\nu }^{2};q\big)-P_{n-1}\big(%
j_{k\nu }^{2};q\big)\,, \\[1em]
P_{0}(j_{k\nu }^{2};q)=1\,,\quad P_{-1}(j_{k\nu }^{2};q)=0\,.%
\end{array}%
\right.
\end{equation*}
\end{proposition}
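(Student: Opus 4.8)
The plan is to prove Proposition~\ref{prop-inicial} by induction on $\,n\,$, using the basic difference relation for $\,J_\nu(\,\cdot\,;q^2)\,$ quoted just above the statement. First I would specialize the recurrence
\begin{equation*}
J_{\nu}\big(q^2x;q^{2}\big)+q^{-\nu}\big(q^2x^2-1-q^{2\nu}\big)J_{\nu}\big(qx;q^{2}\big)+ J_{\nu}\big(x;q^{2}\big)=0
\end{equation*}
to the point $\,x=q^{m}j_{k\nu}\,$, which turns it into the three-term relation
\begin{equation*}
J_{\nu}\big(q^{m+2}j_{k\nu};q^{2}\big)=\Big\{\big(q^{\nu}+q^{-\nu}\big)-q^{-\nu+2m}j_{k\nu}^{2}\Big\}J_{\nu}\big(q^{m+1}j_{k\nu};q^{2}\big)-J_{\nu}\big(q^{m}j_{k\nu};q^{2}\big)\,,
\end{equation*}
after isolating the middle term and using $\,q^{2}(q^{m}j_{k\nu})^{2}=q^{2m+2}j_{k\nu}^{2}\,$; writing $\,m=n\,$ and reindexing makes the coefficient $\,q^{-\nu+2(n+1)}\,$ appear exactly as in the statement.

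Next I would \emph{define} the polynomials $\,P_n(x;q)\,$ directly by the stated recurrence: $\,P_{-1}(x;q)=0\,$, $\,P_{0}(x;q)=1\,$, and
\begin{equation*}
P_{n+1}(x;q)=\Big\{\big(q^{\nu}+q^{-\nu}\big)-q^{-\nu+2(n+1)}x\Big\}P_{n}(x;q)-P_{n-1}(x;q)\,.
\end{equation*}
An immediate sub-induction shows that $\,P_n\,$ has degree exactly $\,n\,$ in $\,x\,$: the leading term of the bracket contributes $\,-q^{-\nu+2(n+1)}x\cdot(\text{lead of }P_n)\,$, which has degree $\,n+1\,$ and nonzero coefficient, while $\,P_{n-1}\,$ only has degree $\,n-1\,$, so no cancellation of the top degree can occur. (One can even track the leading coefficient explicitly as a product of the $\,-q^{-\nu+2j}\,$, though this is not needed.)

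Then I would verify the claimed identity $\,J_{\nu}\big(q^{n+1}j_{k\nu};q^2\big)=J_{\nu}\big(qj_{k\nu};q^2\big)P_{n}\big(j_{k\nu}^2;q\big)\,$ by induction on $\,n\,$. The base cases $\,n=0\,$ and (vacuously, using $\,P_{-1}=0\,$) $\,n=-1\,$ hold: for $\,n=0\,$ both sides equal $\,J_\nu(qj_{k\nu};q^2)\,$, and for $\,n=-1\,$ the right side is $\,0\,$ while $\,J_\nu(j_{k\nu};q^2)=0\,$ by the very definition of $\,j_{k\nu}\,$ as a zero of $\,J_\nu(\,\cdot\,;q^2)\,$. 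For the inductive step, apply the specialized three-term relation above with the running index, substitute the inductive hypotheses for $\,J_{\nu}\big(q^{n+1}j_{k\nu};q^2\big)\,$ and $\,J_{\nu}\big(q^{n}j_{k\nu};q^2\big)\,$, factor out $\,J_\nu(qj_{k\nu};q^2)\,$, and recognize the bracketed combination of $\,P_n\,$ and $\,P_{n-1}\,$ as precisely $\,P_{n+1}\big(j_{k\nu}^2;q\big)\,$.

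I do not expect a serious obstacle here; the only point requiring a little care is the bookkeeping of exponents when passing from the $\,x\mapsto q^2x\,$ form of the difference equation to the evaluation at $\,q^{n}j_{k\nu}\,$, making sure the power $\,q^{-\nu+2(n+1)}\,$ rather than, say, $\,q^{-\nu+2n}\,$ multiplies $\,j_{k\nu}^2\,$ in the $\,(n+1)\,$st step — this is just a matter of consistently writing the difference relation at the argument $\,q^{n}j_{k\nu}\,$ and noting $\,q^2(q^{n}j_{k\nu})^2=q^{2(n+1)}j_{k\nu}^2\,$. The use of $\,J_\nu(j_{k\nu};q^2)=0\,$ is what makes the recursion ``start correctly'' and is essential to the whole argument.
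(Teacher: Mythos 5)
Your proposal is correct and follows essentially the same route as the paper: specialize the shifted difference relation of Koelink--Swarttouw at $x=q^{n}j_{k\nu}$, define $P_n$ by the resulting three-term recurrence, and run a two-step induction whose starting point uses $J_\nu(j_{k\nu};q^2)=0$. The exponent bookkeeping ($q^2(q^n j_{k\nu})^2=q^{2(n+1)}j_{k\nu}^2$) and the degree argument via the nonvanishing leading coefficient $-q^{-\nu+2(n+1)}$ are both handled correctly.
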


Let
\begin{equation}
P_{n}\big(j_{k\nu }^{2};q\big):=\sum_{j=0}^{n}a_{j}^{(n,\nu )}(q)\big(%
j_{k\nu }^{2}\big)^{j}\text{.}  \label{notation}
\end{equation}%
We have the following recurrence relation for the polynomial coefficients $%
\,a_{j}^{(n,\nu )}\equiv a_{j}^{(n,\nu )}(q)$:
\begin{equation}
\left\{
\begin{array}{l}
a_{j}^{(n+1,\nu )}=\big(q^{\nu }+q^{-\nu }\big)a_{j}^{(n,\nu )}-q^{-\nu
+2(n+1)}a_{j-1}^{(n,\nu )}-a_{j}^{(n-1,\nu )}\;\;,\quad j\leq n, \\[1em]
a_{-1}^{(0,\nu )}=0\;\quad ,\quad a_{0}^{(0,\nu )}=1, \\[0.8em]
a_{j}^{(n,\nu )}=0\quad \mbox{whenever}\quad j>n\,,%
\end{array}%
\right.  \label{recurrence-coefficients}
\end{equation}%
Moreover, it follows from (\ref{recurrence-coefficients}) that, for every
integer $\,n\,$,
\begin{equation}
a_{0}^{(n,\nu )}=q^{-n\nu }\sum_{i=0}^{n}\big(q^{2\nu }\big)^{i}\,\quad %
\mbox{and}\quad a_{n}^{(n,\nu )}=(-1)^{n}q^{n(n+1-\nu )}\,.
\label{recurrence-coefficientsnn}
\end{equation}%
From (\ref{recurrence-coefficients}) one also deduces that

\begin{equation}  \label{recurrence-coefficients-nova}
\left\{
\begin{array}{l}
\displaystyle a_j^{(n,\nu)}=-q^{2-\nu}\sum_{\lambda=0}^{n-j}q^{2(n-1-%
\lambda)} a_0^{(\lambda,\nu)}a_{j-1}^{(n-1-\lambda,\nu)}\;\;,\quad j\leq n \\%
[1em]
a_0^{(0,\nu)}=1 \\[0.8em]
a_j^{(n,\nu)}=0 \quad \mbox{whenever} \quad j>n\,,%
\end{array}
\right.
\end{equation}

Now we are able to prove the following result for the $\,a_j^{(n,\nu)}$.

\vspace{0.7em}

\begin{proposition}
\label{fundamental-relation} An explicit expression for the polynomial
coefficients $\,a_j^{(n,\nu)}$ is given by
\begin{equation*}
\begin{array}{l}
\label{newrelation} \displaystyle\!\!\!a_{j}^{(n,\nu)}\!=\!(\!-%
\!1)^{j}q^{j(j+1-\nu)} \!\sum_{i=0}^{\!\left[\frac{n-j}{2}\right]%
\!}\!a_{0}^{(n\!-\!j\!-\!2i,\nu)}q^{2i} \frac{\!\left(\!\left(q^2\right)^{%
\!j}\!;q^2\!\right)_{\!i}}{\left(q^2;q^2\right)_{i}} \frac{%
\!\left(\!\left(q^2\right)^{\!1\!+\!j}\!;q^2\!\right)_{\!n\!-\!j\!-\!2i}}{%
\left(q^2;q^2\right)_{n\!-\!j\!-\!2i}} \frac{\!\left(\!\left(q^2\right)^{\!1%
\!+\!n\!-\!2i}\!;q^2\!\right)_{\!i}}{ \!\left(\!\left(q^2\right)^{n\!-\!j\!-%
\!2i\!+\!2}\!;q^2\!\right)_{\!i}} \\[1em]
\displaystyle\hspace{2em}=\!(\!-\!1)^{j}q^{j(j+1-\nu)} \!\sum_{i=0}^{\!\left[%
\frac{n-j}{2}\right]\!}\!a_{0}^{(n\!-\!j\!-\!2i,\nu)}q^{2i} \frac{%
\!\left(\!\left(q^2\right)^{\!j}\!;q^2\!\right)_{\!i}}{\left(q^2;q^2%
\right)_{i}} \frac{\!\left(\!\left(q^2\right)^{\!1\!+\!j}\!;q^2\!\right)_{%
\!n\!-\!j\!-\!i}}{\left(q^2;q^2\right)_{n\!-\!j\!-\!i}} \frac{%
\!\left(\!\left(q^2\right)^{\!1\!+\!n\!-\!j\!-\!2i}\!;q^2\!\right)_{\!1}}{
\!\left(\!\big(q^2\big)^{\!1\!+\!n\!-\!j\!-\!i}\!;q^2\!\right)_{\!1}},%
\end{array}%
\end{equation*}
with $\,0\leq j\leq n\,$, $\,n=0,1,2,\cdots\,.$
\end{proposition}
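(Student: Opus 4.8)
The plan is to establish the explicit formula for $a_j^{(n,\nu)}$ by induction on $n$, using the "nova" recurrence (\ref{recurrence-coefficients-nova}) as the engine and the finite-sum identity of Lemma \ref{finit-sum} to collapse the resulting double sum into the claimed closed form. First I would fix $j$ and verify the base cases: for $n=j$ the sum on the right has only the term $i=0$, and all the $q$-shifted factorials with subscript $0$ equal $1$, so the right-hand side reduces to $(-1)^j q^{j(j+1-\nu)} a_0^{(0,\nu)} = (-1)^j q^{j(j+1-\nu)}$, which matches $a_j^{(j,\nu)} = a_n^{(n,\nu)}$ from (\ref{recurrence-coefficientsnn}); one should also check $n=j+1$ similarly (again a single term $i=0$), to feed the two-step structure hidden in the floor function.

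Next, assuming the formula holds for all indices up to $n-1$, I would substitute the inductive expression for $a_{j-1}^{(n-1-\lambda,\nu)}$ into the top line of (\ref{recurrence-coefficients-nova}):
\begin{equation*}
a_j^{(n,\nu)} = -q^{2-\nu}\sum_{\lambda=0}^{n-j} q^{2(n-1-\lambda)} a_0^{(\lambda,\nu)} \left[ (-1)^{j-1} q^{(j-1)j-(j-1)\nu} \sum_{i} a_0^{(n-1-\lambda-(j-1)-2i,\nu)} q^{2i}\, (\cdots)\right].
\end{equation*}
The power of $q$ out front recombines as $-q^{2-\nu}\cdot(-1)^{j-1} q^{(j-1)(j-\nu)} = (-1)^j q^{j(j+1-\nu)}\cdot q^{-2j+\cdots}$ up to a clean monomial, which will match the prefactor in the statement after the $q^{2(n-1-\lambda)}$ factors are absorbed. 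The core combinatorial task is then to interchange the $\lambda$- and $i$-summations, use Lemma \ref{product-coefficients-00} to rewrite the products $a_0^{(\lambda,\nu)} a_0^{(n-1-\lambda-(j-1)-2i,\nu)}$ as a single sum of $a_0^{(\cdot,\nu)}$'s over a shifted index, and then recognize that the remaining purely $q$-factorial sum over the inner variable is exactly an instance of Lemma \ref{finit-sum} (with the substitution $q\to q^2$ and appropriate identification of $i$, $j$, $n$ in that lemma's notation). Applying that lemma produces the ratio $\dfrac{\big((q^2)^{1+j};q^2\big)_{n-j-i}}{(q^2;q^2)_{n-j-i}} \cdot \dfrac{\big((q^2)^{1+n-j-2i};q^2\big)_1}{\big((q^2)^{1+n-j-i};q^2\big)_1}$, which is precisely the second displayed form in Proposition \ref{fundamental-relation}; the first form then follows from (\ref{symmetry}).

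The main obstacle I anticipate is purely bookkeeping rather than conceptual: aligning the index ranges so that the double sum genuinely matches the hypotheses of Lemma \ref{finit-sum}. In particular the floor functions $[\tfrac{n-j}{2}]$ versus $[\tfrac{n-1-j}{2}]$ must be reconciled — the parity shift when passing from $n-1$ to $n$ is exactly what the structure $a_0^{(\lambda,\nu)}a_0^{(m-\lambda,\nu)} = \sum_\theta a_0^{(m-2\theta,\nu)}$ in Lemma \ref{product-coefficients-00} is designed to handle, so the two lemmas of Section 3 must interlock cleanly. A secondary subtlety is checking that none of the $q$-shifted factorials in denominators vanish over the summation range (so that (\ref{symmetry}) may be applied), which holds here because all the bases are positive powers of $q^2$ with $0<q<1$. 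Once the index alignment is settled, the identity is forced term by term, and the equality of the two displayed forms is immediate from the symmetry relation (\ref{symmetry}).
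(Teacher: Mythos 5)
Your plan is correct and follows essentially the same route as the paper: induction on $n$ via the recurrence (\ref{recurrence-coefficients-nova}), substitution of the inductive expression for $a_{j-1}^{(n-1-\lambda,\nu)}$, interchange of the $\lambda$- and $i$-sums, Lemma \ref{product-coefficients-00} to collapse the products $a_0^{(\lambda,\nu)}a_0^{(\cdot,\nu)}$, and Lemma \ref{finit-sum} to produce the closed form, with (\ref{symmetry}) linking the two displayed expressions. The only cosmetic difference is your choice of base case ($n=j$ rather than $n=0$ together with the trivial $j=0$ case), which does not affect the argument.
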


\begin{proof}
The proof is carried out by induction and is rather long and technical, thus
we will simply present a sketch.

When $\,n=0\,$ the corresponding proposition is true. We point out that
Proposition \ref{newrelation} is clearly true for every $\,n\,$ when $%
\,j=0\, $. Let us, now, admit that
\begin{equation*}
\!a_{l}^{(k,\nu )}=(\!-\!1)^{l}q^{l(l+1-\nu )}\!\sum_{i=0}^{\!\left[ \frac{%
k-l}{2}\right] \!}\!a_{0}^{(k\!-\!l\!-\!2i,\nu )}q^{2i}\frac{\!\left(
\!\left( q^{2}\right) ^{\!l}\!;q^{2}\!\right) _{\!i}}{\left(
q^{2};q^{2}\right) _{i}}\frac{\!\left( \!\left( q^{2}\right)
^{\!1\!+\!l}\!;q^{2}\!\right) _{\!k\!-\!l\!-\!2i}}{\left( q^{2};q^{2}\right)
_{k\!-\!l\!-\!2i}}\frac{\!\left( \!\left( q^{2}\right)
^{\!1\!+\!k\!-\!2i}\!;q^{2}\!\right) _{\!i}}{\!\left( \!\left( q^{2}\right)
^{k\!-\!l\!-\!2i\!+2}\!;q^{2}\!\right) _{\!i}}
\end{equation*}%
holds true for $\,k=0,1,2,\cdots ,n-1:$ and $:0<l\leq k\,$. Then, using (\ref%
{recurrence-coefficients-nova}) and the hypothesis, one gets, for $\,0<j\leq
n\,$,
\begin{equation*}
\begin{array}{lll}
a_{j}^{(n,\nu )} & = & \displaystyle-q^{2-\nu }\sum_{\lambda
=0}^{n-j}q^{2(n-1-\lambda )}a_{0}^{(\lambda ,\nu )}(-1)^{j-1}q^{(j-1)(j-\nu
)}\sum_{i=0}^{\!\left[ \frac{n-j-\lambda }{2}\right] }a_{0}^{(n-j-\lambda
-2i,\nu )}c_{\lambda ,i} \\[1em]
& = & \displaystyle(-1)^{j}q^{j(j+1-\nu )}\sum_{\lambda =0}^{n-j}\left(
\sum_{i=0}^{\left[ \frac{n-j-\lambda }{2}\right] }\big(q^{2}\big)%
^{n-j-\lambda }a_{0}^{(\lambda ,\nu )}a_{0}^{(n-j-\lambda -2i,\nu
)}c_{\lambda ,i}\right)%
\end{array}%
\end{equation*}%
with $\;:c_{\lambda ,i}=q^{2i}\frac{\left( \left( q^{2}\right)
^{j-1};q^{2}\right) _{i}}{\left( q^{2};q^{2}\right) _{i}}\frac{\left( \left(
q^{2}\right) ^{j};q^{2}\right) _{n-j-\lambda -2i}}{\left( q^{2};q^{2}\right)
_{n-j-\lambda -2i}}\frac{\left( \left( q^{2}\right) ^{n-\lambda
-2i};q^{2}\right) _{i}}{\left( \left( q^{2}\right) ^{n-j-\lambda
-2i+2};q^{2}\right) _{i}}\,,\;$ hence
\begin{equation*}
a_{j}^{(n,\nu )}=(-1)^{j}q^{j(j+1-\nu )}\sum_{i=0}^{\left[ \frac{n-j}{2}%
\right] }\left( \sum_{\lambda =0}^{n-j-2i}\big(q^{2}\big)^{n-j-\lambda
}a_{0}^{(\lambda ,\nu )}a_{0}^{(n-j-\lambda -2i,\nu )}c_{\lambda ,i}\right) .
\end{equation*}%
Considering $\;\gamma _{\lambda ,i}=\big(q^{2}\big)^{n-j-\lambda
-i}c_{\lambda ,i}\;$ in the last identity and, then, using Lemma \ref%
{product-coefficients-00} it results
\begin{equation*}
a_{j}^{(n,\nu )}=(-1)^{j}q^{j(j+1-\nu )}\sum_{\lambda =0}^{\left[ \frac{n-j}{%
2}\right] }\big(q^{2}\big)^{i}\left( \sum_{\theta =0}^{\left[ \frac{n-j}{2}%
\right] -i}a_{0}^{(n-j-2i-2\theta ,\nu )}\left( \sum_{\lambda
=0}^{n-j-2i-\theta }\gamma _{\lambda ,i}\right) \right) \,,
\end{equation*}%
which can be transformed into
\begin{equation*}
\begin{array}{l}
\displaystyle a_{j}^{(n,\nu )}=(-1)^{j}q^{j(j+1-\nu )}\sum_{i=0}^{\left[
\frac{n-j}{2}\right] }a_{0}^{(n-j-2i,\nu )}\big(q^{2}\big)^{i}\frac{\left( %
\big(q^{2}\big)^{j};q^{2}\right) _{i}}{\left( q^{2};q^{2}\right) _{i}}\times
\\[1em]
\hspace{0.3em}\displaystyle\left( \sum_{\theta =0}^{i}\big(q^{2}\big)%
^{2\theta }\frac{\left( \big(q^{2}\big)^{j-1};q^{2}\right) _{\theta }}{%
\left( q^{2};q^{2}\right) _{\theta }}\left( \sum_{\lambda =0}^{n-j-2i}\!\big(%
q^{2}\big)^{\lambda }\frac{\left( \big(q^{2}\big)^{j};q^{2}\right)
_{i+\lambda }}{\left( q^{2};q^{2}\right) _{i+\lambda }}\frac{\left( \big(%
q^{2}\big)^{1+i+\lambda -\theta };q^{2}\right) _{1}}{\left( \big(q^{2}\big)%
^{1+i+\lambda };q^{2}\right) _{1}}\right) \!\right) ,%
\end{array}%
\end{equation*}%
and the proposition follows by Lemma \ref{finit-sum}.
\end{proof}

\begin{remark}
Notice that Proposition \ref{fundamental-relation} holds true for every
nonnegative integers $\,n\,$ and $\,j\,$ since, when $\,j>n\,$, then, by (%
\ref{recurrence-coefficients}), both members become identically zero.
\end{remark}


We also need the following result.

\begin{lemma}
\label{uniform-bounded-sequence} For $\,n=0,1,2,\ldots\,$, the sequence $\,%
\displaystyle\left\{J_{\nu}\big(q^{1+n}j_{k\nu };q^{2}\big)\right\}_{k\in%
\mathbb{N}}$ is uniformly bounded (with respect to $\,n$) whenever we fix $%
\,\nu>0$.
\end{lemma}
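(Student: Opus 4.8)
The plan is to control $\,J_{\nu}(q^{1+n}j_{k\nu};q^{2})\,$ by splitting on the size of its argument, the dividing line being $\,n=k-1\,$. Fix $\,\nu>0\,$ and choose $\,k_{*}\,$ so large that both Theorem A and Theorem C hold for all $\,k\ge k_{*}\,$. There are only finitely many indices $\,k<k_{*}\,$, and for each of them the set $\,\{q^{1+n}j_{k\nu}:n\ge0\}\,$ lies in the compact interval $\,[0,qj_{k\nu}]\,$, on which $\,J_{\nu}(\cdot;q^{2})\,$ is continuous; hence $\,\sup_{n}|J_{\nu}(q^{1+n}j_{k\nu};q^{2})|<\infty\,$ for those finitely many $\,k\,$. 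For $\,k\ge k_{*}\,$ and $\,n\ge k-1\,$, Theorem A gives $\,q^{1+n}j_{k\nu}=q^{\,1+n-k+\epsilon_{k}^{(\nu)}}\in(0,1)\,$; since $\,\nu>0\,$ the function $\,J_{\nu}(\cdot;q^{2})\,$ is continuous on $\,[0,\infty)\,$ (it is $\,z^{\nu}\,$ times an entire function), hence bounded on $\,[0,1]\,$, which settles the small-argument range.

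The work lies in the range $\,k\ge k_{*}\,$, $\,0\le n\le k-2\,$. Here I would invoke Proposition \ref{prop-inicial} to write $\,J_{\nu}(q^{1+n}j_{k\nu};q^{2})=J_{\nu}(qj_{k\nu};q^{2})\,P_{n}(j_{k\nu}^{2};q)\,$, estimate the first factor by Theorem C as $\,|J_{\nu}(qj_{k\nu};q^{2})|\le C_{1}\,q^{(k+\nu)(k-1)}\,$ with $\,C_{1}=C_{1}(q,\nu)\,$, and bound the polynomial $\,P_{n}(j_{k\nu}^{2};q)=\sum_{j=0}^{n}a_{j}^{(n,\nu)}(j_{k\nu}^{2})^{j}\,$ termwise. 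The crucial ingredient is the coefficient estimate
\[
\bigl|a_{j}^{(n,\nu)}\bigr|\le C_{2}\,q^{\,j^{2}+j-n\nu}\qquad(0\le j\le n),\quad C_{2}=C_{2}(q,\nu),
\]
which I would extract from the explicit formula of Proposition \ref{fundamental-relation}: every $\,q$-shifted factorial appearing there has base in $\,(0,q^{2}]\,$, hence is squeezed between two positive constants depending only on $\,q\,$; the factor $\,a_{0}^{(n-j-2i,\nu)}=q^{-(n-j-2i)\nu}\sum_{l=0}^{n-j-2i}q^{2\nu l}\,$ is $\,\mathcal{O}\!\bigl(q^{-(n-j-2i)\nu}\bigr)\,$ because $\,\nu>0\,$ makes the geometric sum bounded; and the remaining sum over $\,i\,$ of $\,q^{2i(\nu+1)}\,$ is bounded too. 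Collecting exponents turns the prefactor $\,q^{\,j(j+1-\nu)}q^{-(n-j)\nu}\,$ into $\,q^{\,j^{2}+j-n\nu}\,$.

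Granting this, Theorem A gives $\,j_{k\nu}^{2j}<q^{-2kj}\,$, so that
\[
\bigl|P_{n}(j_{k\nu}^{2};q)\bigr|\le C_{2}\,q^{-n\nu}\sum_{j=0}^{n}q^{\,j(j+1-2k)},
\]
and because $\,n\le k-2\,$ the terms $\,q^{\,j(j+1-2k)}\,$ grow with $\,j\,$ on $\,\{0,\dots,n\}\,$ (the ratio of consecutive terms is $\,q^{2(j+1-k)}>1\,$ since $\,j<k-1\,$), whence the sum is at most $\,(1-q^{4})^{-1}q^{\,n(n+1-2k)}\,$. Multiplying by the Theorem C bound and writing $\,t:=k-n\ge2\,$, the total power of $\,q\,$ is
\[
(k+\nu)(k-1)+n(n+1-2k)-n\nu=(t-1)(t+\nu)\ge2+\nu>0,
\]
so $\,|J_{\nu}(q^{1+n}j_{k\nu};q^{2})|\le C_{1}C_{2}(1-q^{4})^{-1}q^{\,2+\nu}\,$, a constant free of $\,n\,$ and $\,k\,$. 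Taking the largest of the three bounds produces the asserted uniform bound.

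I expect the coefficient estimate $\,|a_{j}^{(n,\nu)}|\le C_{2}\,q^{\,j^{2}+j-n\nu}\,$ to be the main obstacle: one has to verify that all the $\,q$-factorial quotients in Proposition \ref{fundamental-relation} are controlled uniformly in $\,n,j,i\,$ — which hinges on every base staying in $\,(0,q^{2}]\,$, safely away from $\,1\,$ — and that the nested sums over $\,i\,$ collapse to a clean geometric bound. After that, only the (slightly delicate) bookkeeping of exponents remains, and it is this that yields the fortunate factorization $\,(t-1)(t+\nu)\,$, guaranteeing non-negativity of the exponent for $\,t\ge2\,$ and hence the desired cancellation between the decay of $\,J_{\nu}(qj_{k\nu};q^{2})\,$ and the growth of $\,P_{n}(j_{k\nu}^{2};q)\,$.
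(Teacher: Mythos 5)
Your argument is correct, but it takes a genuinely different route from the paper. The paper splits on the sign/size of $k-n$ as you do, and handles the small-argument regime the same way; but for the hard regime ($k-n$ large) it never touches the polynomials $P_n$. Instead it locates $q^{1+n}j_{k\nu}=q^{1+n-k+\epsilon_k^{(\nu)}}$ inside the interval $\left]j_{k-n-1,\nu},\,q^{1+n-k}\right[$ via the interlacing supplied by Theorem A, observes that $J_{\nu}(\cdot;q^2)$ vanishes at the left endpoint, is monotone on that interval (Corollary 2 of \cite{JLC4}), and is small at the right endpoint by the estimate (12) of \cite{BBEB}; boundedness follows by squeezing, with no explicit constant. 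Your route instead exploits the factorization $J_{\nu}(q^{1+n}j_{k\nu};q^{2})=J_{\nu}(qj_{k\nu};q^{2})P_{n}(j_{k\nu}^{2};q)$ of Proposition \ref{prop-inicial} together with a coefficient bound $|a_{j}^{(n,\nu)}|\leq C_{2}q^{j^{2}+j-n\nu}$ extracted from Proposition \ref{fundamental-relation}; I checked the bookkeeping and the exponent does collapse to $(t-1)(t+\nu)$ with $t=k-n\geq 2$, so the bound $\mathcal{O}\bigl(q^{(t-1)(t+\nu)}\bigr)$ holds and is uniform. What your approach buys is a fully quantitative, uniform-in-$n$ estimate on $P_{n}(j_{k\nu}^{2};q)$ (the paper only records the pointwise-in-$n$ asymptotics $P_{n}(j_{k\nu}^{2};q)=\mathcal{O}(q^{n(n+1-\nu)}j_{k\nu}^{2n})$ later on), and it shows explicit decay in $k-n$ rather than mere boundedness; the price is that it leans on the heavy combinatorial identity of Proposition \ref{fundamental-relation}, whereas the paper's squeeze argument needs only soft information about the zeros and monotonicity. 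Both arguments require $\nu>0$ only in the small-argument/bounded-$k-n$ regime (and, in yours, also to sum the geometric series inside $a_{0}^{(n-j-2i,\nu)}$).
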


\begin{proof}
One must show that there exists $\,C\,$, independent of $\,k\,$ and $\,n\,$,
such that
\begin{equation*}
\left\vert J_{\nu }\big(q^{1+n}j_{k\nu };q^{2}\big)\right\vert \leq C
\end{equation*}%
for every $\,k\,$ and $\,n\,$. Using Theorem A we may write, for $k$ large
enough,
\begin{equation*}
J_{\nu }\big(q^{1+n}j_{k\nu };q^{2}\big)=J_{\nu }\big(q^{1+n-k+\epsilon
_{k}^{(\nu )}};q^{2}\big).
\end{equation*}%
Thus, by (\ref{zerosbound1}) and (\ref{boundasymptotic}), the last equality
puts in evidence that $\,n-k\,$ will play a crucial role on the behavior of $%
\,\displaystyle J_{\nu }\big(q^{1+n}j_{k\nu };q^{2}\big).$

\noindent We will separate the proof in two parts.

(i)\: When $k-n>0$ is sufficiently large then, by Corollary 3 of \cite{JLC4}%
,
\begin{equation*}
j_{k-n-1,\nu}=q^{1+n-k+\epsilon_{k-n-1}^{(\nu)}}<
q^{1+n-k+\epsilon_{k}^{(\nu)}}=q^{1+n}j_{k\nu}<q^{1+n-k}.
\end{equation*}
Therefore, being $\,n\in\mathbb{N}\,$ then
$q^{1+n}j_{k\nu}\in \left]j_{k-n-1,\nu}\,,\,q^{1+n-k}\right[\,$,
whenever $k-n>0$ is sufficiently large.

\noindent On one hand, by the definition of $\,j_{k-n-1,\nu}\,$,
\begin{equation}  \label{R1}
J_{\nu}\big(j_{k-n-1,\nu};q^{2}\big)=0,
\end{equation}
and, on the other hand, by (12) of \cite[p. 1205]{BBEB}, for large
(positive) values of $\,k-n\,$,
\begin{equation}  \label{R2}
J_{\nu}\big(q^{1+n-k};q^2\big)\leq \frac{\left(-q^2,-q^{2(\nu+1)};q^2%
\right)_{\infty}}{\left(q^2;q^2\right)_{\infty}} q^{(k-n+\nu)(k-n-1)}.
\end{equation}
However, again by Theorem A,
\begin{equation*}
\left]j_{k-n-1,\nu}\,,\,q^{1+n-k}\right[\subset\left]q^{1+n-k+%
\alpha_{k-n-1}^{(\nu)}},q^{1+n-k}\right[.
\end{equation*}
Thus, by Corollary 2 of \cite{JLC4}, the function $\,J_{\nu}\big(x;q^{2}\big)%
\,$ is monotone in the interval $\,\left]j_{k-n-1,\nu}\,,\,q^{1+n-k}\right[%
\, $, hence, by (\ref{R1}) and (\ref{R2}), $\,\left|J_{\nu}\big(%
q^{1+n}j_{k\nu };q^{2}\big)\right|\,$ is bounded whenever $\,k-n\,$ is
sufficiently large (positive).

\vspace{0.3em} (ii)\: Now, let us consider all the other possible cases for $%
\,k-n\,$, i.e., the cases where $\,k-n\,$ is bounded above. Then $\,n-k\,$
is bounded below, thus, if $\,\nu>0\,$, $\,\left|J_{\nu}\big(q^{1+n}j_{k\nu
};q^{2}\big)\right|= \left|J_{\nu}\big(q^{1+n-k+\epsilon_{k}^{(\nu)}};q^{2}%
\big)\right|\,$ is trivially bounded for all such cases.

Joining both cases (i) and (ii) and since all the possible cases for $\,k\,$
and $\,n\,$ were considered, the lemma follows.
\end{proof}

\subsection{Behavior of $\,J_{\protect\nu}^{\prime}\big(qj_{k\protect\nu %
};q^{2}\big)$}

\noindent The asymptotic behavior of $\,J_{\nu }^{\prime }\big(qj_{k\nu
};q^{2}\big)\,$ when $\,k\rightarrow \infty \,$ was recently obtained \cite[%
Lemma 1]{JLC4}. The proof combines the asymptotic properties of the infinite$%
\,q$-shifted factorial $\,(z;q)\,$ (or the $\,q$-Pochhammer symbol) from
\cite{Daalhuis} with the ideas developed in \cite{SS2016}.

\noindent Nevertheless, we present the next lemma which is equivalent to the
afore mentioned result and provides a different direct proof based only on
the definition of the Hahn-Exton $\,q$-Bessel function and its derivative.

\begin{lemma}
\label{derivative} For large values of $\,k\,$,
\begin{equation*}
J_{\nu }^{\prime }(j_{k\nu };q^{2})=A_{\nu }(q)\,q^{-\big(k+\frac{\nu }{2}%
-1-\epsilon _{k}^{(\nu )}\big)^{2}}S_{k}\,,
\end{equation*}

where $\;A_{\nu}(q)\!=\!\frac{2\big(q^{2(\nu+1)};q^2\big)_{\infty}}{\big(%
q^2;q^2\big)_{\infty}} \,q^{\frac{(\nu-1)(\nu-3)}{4}}\;$ and
$\;\displaystyle\liminf_{k\rightarrow\infty}|S_k|\!>\!0\,.$
\end{lemma}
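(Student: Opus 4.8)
The goal is to pin down the exact leading-order behaviour of $J_{\nu}'(j_{k\nu};q^2)$ for large $k$, working directly from the power series
$$
J_{\nu}(z;q^2)=z^{\nu}\frac{(q^{2(\nu+1)};q^2)_{\infty}}{(q^2;q^2)_{\infty}}\sum_{m=0}^{\infty}(-1)^m\frac{q^{m(m+1)}}{(q^{2(\nu+1)};q^2)_m(q^2;q^2)_m}z^{2m}.
$$
First I would differentiate term by term and evaluate at $z=j_{k\nu}$, writing $j_{k\nu}=q^{-k+\epsilon_k^{(\nu)}}$ via Theorem~A. Each term of the resulting series carries a $q$-power of the form $q^{(\text{quadratic in }m)}\,(j_{k\nu})^{\nu+2m-1}=q^{Q(m)}$ with $Q(m)$ a downward parabola in $m$ (because the substitution $z=q^{-k+\epsilon_k^{(\nu)}}$ turns $z^{2m}$ into $q^{-2m(k-\epsilon_k^{(\nu)})}$, and the intrinsic factor $q^{m(m+1)}$ then makes $Q$ genuinely quadratic with positive leading coefficient in $m$). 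The plan is to locate the integer $m=m_k$ maximizing $Q(m)$ — it sits near $m\approx k+\tfrac{\nu}{2}-1-\epsilon_k^{(\nu)}$ up to an $O(1)$ shift — factor out $q^{-(k+\nu/2-1-\epsilon_k^{(\nu)})^2}$ (the completed square of the quadratic at its vertex, which is exactly the $q$-power appearing in the statement), and collect the remaining constant $A_{\nu}(q)$ from the prefactor $(q^{2(\nu+1)};q^2)_{\infty}/(q^2;q^2)_{\infty}$ together with the $q^{(\nu-1)(\nu-3)/4}$ that drops out of completing the square.

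**The main term and the tail.** After the extraction, the series becomes $S_k=\sum_{m}(-1)^m q^{R_k(m)}/[(q^{2(\nu+1)};q^2)_m(q^2;q^2)_m]$ where $R_k(m)=Q(m)-Q_{\max}\le 0$ vanishes (to leading order) at the dominant index. Because $0<q<1$ and $R_k$ is concave with a gap of order $1$ between consecutive values near the vertex, the terms decay geometrically away from $m=m_k$, so $S_k$ is, up to a uniformly bounded error, a convergent theta-like series in the single offset variable $j:=m-m_k$. The key point to make rigorous is that $\liminf_{k\to\infty}|S_k|>0$: one shows $S_k$ converges (along the natural residue classes dictated by how $\epsilon_k^{(\nu)}\to 0$ forces $m_k$ to behave) to a finite $q$-series of the shape $\sum_{j}(-1)^{j}q^{j^2+cj}/(\cdots)_j(\cdots)_j$ which is manifestly nonzero — its value is an entire function of a $q$-exponential type evaluated off its zero set, or can be seen nonzero by the alternating-series/dominant-term estimate since the $j=0$ term strictly dominates the sum of the rest once $q$ is fixed. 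Using $\epsilon_k^{(\nu)}\to 0$ and the $O(q^{2k})$ bound $(\ref{boundasymptotic})$, these limiting sums are finite in number (indexed by the limiting fractional part of $k+\nu/2-1$, or trivial if that is constant), each nonzero, hence the $\liminf$ is positive.

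**The main obstacle.** The delicate part is not the algebra of completing the square — that is bookkeeping — but controlling $S_k$ uniformly as $k\to\infty$ while the dominant index $m_k$ and the residual quadratic $R_k(m)$ both depend on $k$ through the slowly vanishing quantity $\epsilon_k^{(\nu)}$. One must show the "center" of the theta sum stabilizes enough that $S_k$ stays bounded away from $0$; the risk is that $\epsilon_k^{(\nu)}$ could conspire so that $S_k$ approaches a zero of the limiting entire function. Here I would lean on the precise bound $\epsilon_k^{(\nu)}<\alpha_k^{(\nu)}=O(q^{2k})$ from $(\ref{zerosbound1})$–$(\ref{boundasymptotic})$, which makes $q^{\epsilon_k^{(\nu)}}=1+O(q^{2k})$, so the perturbation to $R_k(m)$ is $O(q^{2k}\cdot m)$ and hence negligible uniformly over the relevant range $m=m_k+O(1)$; this pins $S_k$ to within $o(1)$ of an explicit nonzero constant (or one of finitely many, when $k+\nu/2-1$ has a genuine fractional part), giving the $\liminf$ claim. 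Comparing the outcome with \cite[Lemma 1]{JLC4} confirms the identification of $A_{\nu}(q)$ and closes the proof.
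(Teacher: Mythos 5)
The first half of your argument --- termwise differentiation of the power series, the substitution $j_{k\nu}=q^{-k+\epsilon_k^{(\nu)}}$ from Theorem~A, and completing the square to peel off $q^{-(k+\frac{\nu}{2}-1-\epsilon_k^{(\nu)})^2}$ together with the constant $A_\nu(q)$ --- is exactly the paper's route and is sound (up to the minor slip that the vertex of the parabola sits at $m\approx k-\tfrac12-\epsilon_k^{(\nu)}$, not at $k+\tfrac{\nu}{2}-1$; the $\nu$ enters only through the value of $Q$ at the vertex). The problems are in the second half, the claim $\liminf_k|S_k|>0$. First, your displayed $S_k$ has dropped the factor $n$ produced by differentiation; without it the relevant sum is proportional to $J_\nu(j_{k\nu};q^2)/j_{k\nu}^{\nu}=0$, so the shape of $S_k$ matters. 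With it, after the shift $m=n-k$ the numerator is $m+k$, so each fixed-$m$ term grows like $k$; the $O(k)$ contribution cancels only because of the exact antisymmetry of $(-1)^m q^{(m+1/2)^2}$ under $m\mapsto -m-1$ (valid in the limit $\epsilon_k^{(\nu)}\to 0$), and what survives from pairing $m=i$ with $m=-(i+1)$ is $(-1)^i(2i+1)q^{(i+1/2)^2}$ divided by $(q^{2(\nu+1)},q^2;q^2)_\infty$. Note also that the Pochhammer index is $m+k\to\infty$, not the offset $j$, so the limit is \emph{not} a $j$-indexed basic hypergeometric series of the form $\sum_j(-1)^jq^{j^2+cj}/(\cdots)_j(\cdots)_j$ as you wrote.

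Second, and decisively, the nonvanishing of the limiting sum $\sum_{i\ge 0}(-1)^i(2i+1)q^{i(i+1)}$ is not ``manifest'': the dominant-term/alternating-series argument fails, since already the second term $3q^2$ exceeds the first term $1$ whenever $q\ge 1/\sqrt{3}$, and ``an entire function evaluated off its zero set'' simply restates what must be proved. The paper closes exactly this point with Jacobi's identity
\begin{equation*}
\sum_{i=0}^{\infty}(-1)^i(2i+1)q^{i(i+1)}=\prod_{i=1}^{\infty}\bigl(1-q^{2i}\bigr)^{3}>0,
\end{equation*}
which is the essential non-elementary input your proposal does not identify. Without it (or an equivalent positivity statement valid for all $q\in(0,1)$), the $\liminf$ claim is unproven.
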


\begin{proof}
We will present only the main steps of the proof.
Computing the derivative of the function $\,J_{\nu }(z;q^{2})\,$ and
considering $\,z=j_{k\nu }$,
\begin{equation*}
J_{\nu }^{\prime }(j_{k\nu };q^{2})=\frac{2\big(q^{2(\nu +1)};q^{2}\big)%
_{\infty }}{\big(q^{2};q^{2}\big)_{\infty }}\sum\limits_{n=0}^{\infty
}(-1)^{n}\frac{nq^{n(n+1)}}{(q^{2(\nu +1)},q^{2};q^{2})_{n}}\big(j_{k\nu }%
\big)^{2n+\nu -1}\,.
\end{equation*}%
By Theorem A we may write $\,j_{k\nu }=q^{-k+\epsilon _{k}^{(\nu )}}\,$ so,
the above identity becomes
\begin{equation}
J_{\nu }^{\prime }(j_{k\nu };q^{2})=A_{\nu }(q)\,q^{-\big(k+\frac{\nu }{2}%
-1-\epsilon _{k}^{(\nu )}\big)^{2}}S_{k}\,,  \label{comportamento}
\end{equation}%
where $\;A_{\nu }(q)\!=\!\frac{2\big(q^{2(\nu +1)};q^{2}\big)_{\infty }}{%
\big(q^{2};q^{2}\big)_{\infty }}\,q^{\frac{(\nu -1)(\nu -3)}{4}}\;$ and
$\displaystyle S_{k}=\sum\limits_{n=0}^{\infty }(-1)^{n}\frac{nq^{\big(%
n-k+1/2+\epsilon _{k}^{(\nu )}\big)^{2}}}{(q^{2(\nu +1)},q^{2};q^{2})_{n}}$%
\thinspace . 
Considering $\,m=n-k$, straightforward manipulations give
\begin{equation*}
(-1)^{k}S_{k}=\sum\limits_{m=-k}^{\infty }(-1)^{m}\frac{mq^{\big(%
m+1/2+\epsilon _{k}^{(\nu )}\big)^{2}}}{(q^{2(\nu +1)},q^{2};q^{2})_{m+k}}\,.
\end{equation*}%
Thus
\begin{equation*}
(-1)^{k}S_{k}=\sum\limits_{m=-k}^{-(2p+2)}F_{m,k}^{(\nu
)}(q)-\sum\limits_{m=-(2p+1)}^{2p}F_{m,k}^{(\nu
)}(q)+\sum\limits_{m=2p+1}^{\infty }F_{m,k}^{(\nu )}(q)
\end{equation*}%
with
\begin{equation*}
F_{m,k}^{(\nu )}(q)=(-1)^{m}\frac{mq^{\big(m+1/2+\epsilon _{k}^{(\nu )}\big)%
^{2}}}{(q^{2(\nu +1)},q^{2};q^{2})_{m+k}}\,.
\end{equation*}%
Hence,
\begin{equation*}
\left\vert S_{k}\right\vert \geq \left\vert
\sum\limits_{m=-(2p+1)}^{2p}F_{m,k}^{(\nu )}(q)\right\vert -\left\vert
\sum\limits_{m=-k}^{-(2p+2)}F_{m,k}^{(\nu )}(q)\right\vert +\left\vert
\sum\limits_{m=2p+1}^{\infty }F_{m,k}^{(\nu )}(q)\right\vert \,.
\end{equation*}%
Since$\,p\,$ is an arbitrary positive integer and since by (\ref{zerosbound1}%
)-(\ref{boundasymptotic}), $\,\epsilon _{k}^{(\nu )}=\epsilon _{k}^{(\nu
)}(q^{2})\rightarrow 0$ when $\,k\rightarrow \infty $, we have
\begin{equation*}
\left\vert S_{k}\right\vert \geq \frac{q^{1/4}}{\big(q^{2(\nu
+1)},q^{2};q^{2}\big)_{\infty }}\,\left\vert \sum\limits_{i=0}^{\infty
}(-1)^{i}(2i+1)q^{i(i+1)}\right\vert \,.
\end{equation*}%
Identity (10.4.9) of Corollary 10.4.2 due to Jacobi \cite[page 500]{AAR}
guarantees that
\begin{equation}
\sum\limits_{i=0}^{\infty }(-1)^{i}(2i+1)q^{i(i+1)}=\prod_{i=1}^{\infty }%
\big(1-q^{2i}\big)^{3}>0  \label{positivity}
\end{equation}%
thus, by (\ref{comportamento})-(\ref{positivity}), the Lemma follows.
\end{proof}

Notice that from the above Lemma it follows that $J_{\nu }^{\prime }(j_{k\nu
};q^{2})=\mathcal{O}\left( q^{-k(k+\nu-2)}\right)$ as $k\to\infty$.

\subsection{Sufficient conditions}

With the previous notation $\,V_{q}^{+}\,=\,\left\{q^{n}:\,n=0,1,2,\ldots
\right\}$, which coincides with the support points of the $\,q$-integral (%
\ref{qintegral}) in $\,[0,1]\,,$ we will consider the following $\,q$-linear
H\"{o}lder concept \cite[p. 103]{JLC2} adapted to the set of points $%
\,V_{q}^{+}\,$.

\vspace{0.7em} \noindent$\mathbf{Definition.}$ \textsl{If two constants $%
\,M\,$ and $\,\lambda\,$ exist such that
\begin{equation*}
\Big|f\big(q^{n-1}\big)-f\big(q^{n}\big)\Big|\leq M q^{\lambda n}\:,\quad
n=0,1,2,\ldots\,,
\end{equation*}
then the function $\,f\,$ is said to be $\,q$-linear H\"older of order $%
\,\lambda\,$ (in $\,V_q^+\cup \{q^{-1}\}$).}

\vspace{0.4em} In \cite{JLC3} the following upper bound for basic
Fourier-Bessel coefficient (\ref{cfourier}) has been obtained.

\textbf{Theorem.} \emph{If the function} $\,f\,$ \emph{is} $\,q$-\emph{%
linear H\"{o}lder of order} $\,\alpha >0\,$ \emph{in} $\,V_{q}^{+}\,$ \emph{%
and such that} $\,t^{-\frac{1}{2}}f(t)\in L_{q}^{2}(0,1)\,$ \emph{and the
limit} $\,\displaystyle\lim_{x\rightarrow 0^{+}}f(x)=f(0^{+})$ \emph{is
finite then}
\begin{equation*}
\begin{array}{l}
\displaystyle\left\vert \int_{0}^{1}tf(t)J_{\nu }(qj_{k\nu
}t;q^{2})d_{q}t\right\vert \leq \frac{(1-q)q^{\nu -1}}{j_{k\nu }}\left\vert f%
\big(q^{-1}\big)J_{\nu +1}(qj_{k\nu };q^{2})\right\vert + \\[1em]
\hspace{9em}\displaystyle\frac{(1-q)q^{\frac{\nu -3}{2}}}{j_{k\nu }}\eta
_{k,\nu }^{\frac{1}{2}}\left( \frac{q^{\frac{\nu +1}{2}}M_{1}}{(1-q)^{\frac{1%
}{2}}\big(1-q^{2\alpha }\big)^{\frac{1}{2}}}+\frac{q^{\frac{\nu }{2}}-q^{-%
\frac{\nu }{2}}}{q^{\frac{1}{2}}-q^{-\frac{1}{2}}}\sqrt{M_{2}}\right) \,,%
\end{array}%
\end{equation*}%
\emph{where} $\,M_{1}\,$ \emph{and} $\,M_{2}\,$ \emph{are independent of} $%
\,k$ and $\,\eta _{k,\nu }\,$ is given by (\ref{eta}).

\noindent However, the conditions on the function $\,f\,$ stated in this
theorem seem to be not sufficient to obtain the uniform convergence of its
basic Fourier-Bessel expansion and we will need to impose the slightly more
restrictive conditions of the Theorem \ref{uniform-convergence}. First we
need the following lemma.

\begin{lemma}
\label{q-integral-of-coef-fourier-new} If $\,f\,$ is a function such that $\,%
\displaystyle\lim_{x\to 0^+}f(x)<+\infty\,$ and $\,\nu>0\,$ then
\begin{equation*}
\begin{array}{l}
\displaystyle\int_{0}^{1}tf(t)J_{\nu }(qj_{k\nu}t;q^{2})d_{q}t= \frac{%
(1-q)q^{\nu-2}f\big(q^{-1}\big)J_{\nu}(qj_{k\nu};q^{2})}{j_{k\nu}^2} -\frac{%
(1-q)^2q^{\nu-3}}{\left(q^{\frac{1}{2}}-q^{-\frac{1}{2}}\right)^2j_{k\nu}^2}%
\times \\[1em]
\displaystyle\left[ \left(q^{\frac{\nu}{2}}-q^{-\frac{\nu}{2}}\right)
\left(q^{\frac{\nu}{2}}\!\int_{0}^{1}\!J_{\nu}(qj_{k\nu}t;q^2)\frac{f(qt)}{t}%
d_qt- q^{-\frac{\nu}{2}}\!\int_{0}^{1}\!J_{\nu}(qj_{k\nu}t;q^2)\frac{f(t)}{t}%
d_qt\right) \right.- \\[1em]
\displaystyle\left.q^{\frac{\nu}{2}} \left(q^{\frac{\nu}{2}%
}\!\int_{0}^{1}\!\!J_{\nu}(qj_{k\nu}t;q^2)\frac{f(qt)-f(t)}{t}d_qt- q^{-%
\frac{\nu}{2}}\!\int_{0}^{1}\!\!J_{\nu}(qj_{k\nu}t;q^2)\frac{f(t)-f(t/q)}{t}%
d_qt\right) \right],%
\end{array}%
\end{equation*}
whenever the involved $\,q$-integrals exist.
\end{lemma}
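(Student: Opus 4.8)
The plan is to prove the identity in Lemma~\ref{q-integral-of-coef-fourier-new} by applying the $q$-integration by parts formula \eqref{q-integration-by-parts} twice, using the $q$-Bessel difference relation to re-express the resulting boundary and derivative terms. First I would recall the basic difference relation from \cite{KS} with the shift $q\to q^2$, namely
\begin{equation*}
J_{\nu }\big(q^2x;q^{2}\big)+q^{-\nu}\big(q^2x^2-1-q^{2\nu}\big)J_{\nu}\big(qx;q^{2}\big)+ J_{\nu}\big(x;q^{2}\big)=0\,,
\end{equation*}
and rewrite it so that the quantity $t\,J_\nu(qj_{k\nu}t;q^2)$ appears as a symmetric $q$-difference (in $t$) of something built from $J_\nu(qj_{k\nu}t;q^2)/t$; concretely, the combination $J_\nu(q^2 x;q^2)+J_\nu(x;q^2)$ together with the factor $q^2x^2-1-q^{2\nu}$ is exactly what lets one identify $\delta_q$ of a suitable product evaluated on the $q$-linear grid. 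The point of this rearrangement is to turn $\int_0^1 t f(t) J_\nu(qj_{k\nu}t;q^2)\,d_qt$ into an integral of $g(q^{\pm 1/2}t)\,\delta_q h(t)/\delta_q t$ for explicit $g,h$, so that \eqref{q-integration-by-parts} can be invoked.

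Next I would carry out the first $q$-integration by parts. This transfers the symmetric $q$-derivative from the $q$-Bessel factor onto $f$, producing (a) a boundary contribution at $t=q^{-1/2}$ and in the limit $t\to 0^+$ — here the hypothesis $\lim_{x\to 0^+}f(x)<+\infty$ together with $J_\nu(z;q^2)=O(z^\nu)$ and $\nu>0$ kills the lower limit, leaving the term proportional to $f(q^{-1})J_\nu(qj_{k\nu};q^2)/j_{k\nu}^2$ after using \eqref{relation-vii} and simplifying the powers of $q$; and (b) a new integral involving $\delta_q f(t)/\delta_q t$, i.e.\ difference quotients of $f$. Because the symmetric $q$-difference operator \eqref{symmetric-q-difference-operator} samples $f$ at $q^{\pm 1/2}t$, one must be a little careful: writing $\delta_q f/\delta_q x$ out explicitly and grouping the $f(qt)$, $f(t)$, $f(t/q)$ evaluations is what will eventually produce the two bracketed pairs of integrals $\int_0^1 J_\nu(qj_{k\nu}t;q^2)\,f(qt)/t\,d_qt$, $\int_0^1 J_\nu(qj_{k\nu}t;q^2)\,f(t)/t\,d_qt$, etc. I would then perform a second manipulation — either a further integration by parts or simply a regrouping of the difference quotients $f(qt)-f(t)$ and $f(t)-f(t/q)$ — to split off the "telescoping" part $(q^{\nu/2}-q^{-\nu/2})(\cdots)$ from the "second difference" part $q^{\nu/2}(\cdots)$, matching exactly the structure on the right-hand side of the claimed identity, with the prefactor $(1-q)^2 q^{\nu-3}/\big((q^{1/2}-q^{-1/2})^2 j_{k\nu}^2\big)$ emerging from the accumulated constants in \eqref{q-integration-by-parts} and \eqref{symmetric-q-derivative-operator}.

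The main obstacle I anticipate is bookkeeping rather than conceptual: keeping track of the shifts $q^{\pm 1/2}$ in the arguments of $f$ versus the arguments $q^{n+1}j_{k\nu}$ in the $q$-Bessel factor, and ensuring the powers of $q$ and the factors $j_{k\nu}^{\pm 2}$ combine correctly — in particular, the $j_{k\nu}^2$ in the denominators comes from the $q^2x^2$ term in the difference relation once $x=j_{k\nu}t$ is substituted, and one has to divide through by $j_{k\nu}^2$ cleanly. A secondary point requiring care is the justification of the vanishing of the boundary term at $0^+$: one invokes $t^{1/2}J_\nu(qj_{k\nu}t;q^2)\to 0$ as $t\to 0^+$ (valid since $\nu>-1$, and used with $\nu>0$ to also control $f(t)/t$-type factors), and the hypothesis that $f(0^+)$ is finite, so that all the limits $\lim_{n\to\infty}(fg)(q^{1/2+n})$ appearing in \eqref{q-integration-by-parts} are zero. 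Once these are settled, the identity is just the reorganization of the two pieces produced by integration by parts, and the statement "whenever the involved $q$-integrals exist" covers the convergence of the individual integrals on the right-hand side.
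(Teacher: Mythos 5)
Your overall strategy (move a symmetric $q$-difference off the Bessel factor onto $f$ via \eqref{q-integration-by-parts}, use $\nu>0$ and the finiteness of $f(0^+)$ to kill the limits at $0^+$, and identify the boundary term at $q^{-1}$ via \eqref{relation-vii}) is the right one, but your pivotal step is asserted rather than proved, and it is not the step that actually works. The three-term relation (2.12) of \cite{KS} is a \emph{second-order} difference relation in the argument (it ties together $q^{2}x$, $qx$ and $x$); it does not, by itself, exhibit $tJ_{\nu}(qj_{k\nu}t;q^{2})$ as $\delta_{q}h(t)/\delta_{q}t$ for a single product $h$, which is the form \eqref{q-integration-by-parts} requires. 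What actually drives the paper's proof are the two \emph{first-order} $q$-derivative identities (3.7) and (3.8) of \cite[Prop.~4]{JLC3}, i.e.\ $\delta_{q}\big[x^{\pm\nu}J_{\nu}(x;q^{2})\big]/\delta_{q}x\propto x^{\pm\nu}J_{\nu\mp1}\big(q^{-1/2}x;q^{2}\big)$: the first is used to integrate by parts once, producing an intermediate formula with the boundary term in $J_{\nu+1}(qj_{k\nu};q^{2})$ plus two integrals of $J_{\nu+1}$ against $f$ and against $f_{2}(t)=t\,\delta_{q}f(q^{-1/2}t)/\delta_{q}t$; the second is then used to integrate by parts a \emph{second} time inside each of those integrals, which is precisely what turns $J_{\nu+1}$ back into $J_{\nu}$ and generates the nested bracket $(q^{\nu/2}-q^{-\nu/2})(\cdots)-q^{\nu/2}(\cdots)$ with the four integrals of $f(qt)/t$, $f(t)/t$, $(f(qt)-f(t))/t$ and $(f(t)-f(t/q))/t$. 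Your single-pass plan neither introduces nor eliminates $J_{\nu+1}$ and therefore cannot produce that structure; the ``second manipulation'' you wave at is exactly the missing second integration by parts via (3.8), and it has to be made explicit.

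If you insist on starting from the three-term relation, the honest way to exploit it is not through \eqref{q-integration-by-parts} at all but through direct re-indexing of the $q$-sums: write $t^{2}J_{\nu}(qj_{k\nu}t;q^{2})=\tfrac{1}{q^{2}j_{k\nu}^{2}}\big[(1+q^{2\nu})J_{\nu}(qj_{k\nu}t;q^{2})-q^{\nu}J_{\nu}(qj_{k\nu}\,qt;q^{2})-q^{\nu}J_{\nu}(qj_{k\nu}\,t/q;q^{2})\big]$, multiply by $f(t)/t$, and shift the summation index in \eqref{qintegral}; one shift produces the boundary term $(1-q)f(q^{-1})J_{\nu}(qj_{k\nu};q^{2})$ and the other costs nothing because $J_{\nu}(j_{k\nu};q^{2})=0$. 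This is legitimately simpler, but it delivers the \emph{collapsed} combination $(1+q^{2\nu})\int_{0}^{1}J_{\nu}\,f(t)t^{-1}d_{q}t-q^{\nu}\int_{0}^{1}J_{\nu}\,f(qt)t^{-1}d_{q}t-q^{\nu}\int_{0}^{1}J_{\nu}\,f(t/q)t^{-1}d_{q}t$ divided by $q^{2}j_{k\nu}^{2}$, and you would still owe the reader the algebraic check that this matches the displayed nested expression. When I expand the displayed bracket I get its \emph{negative} for the non-boundary block (the two agree exactly when $(1+q^{2\nu})\int J_{\nu}f(t)t^{-1}d_{q}t=q^{\nu}\int J_{\nu}\big(f(qt)+f(t/q)\big)t^{-1}d_{q}t$, as happens for $f(t)=t^{\nu}$), so on this route you must track every sign and reconcile the outcome with the statement before declaring victory.
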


\begin{proof}
Using first (3.7) of \cite[Proposition 4, p. 7]{JLC3}
\begin{equation*}
\frac{\delta_q\left[x^{\nu}J_{\nu}\big(x;q^2\big)\right]}{\delta_q x}= \frac{%
q^{-\frac{\nu}{2}}}{1-q}x^{\nu}J_{\nu-1}\big(q^{-\frac{1}{2}}x;q^2\big),
\end{equation*}
then, the $\,q$-integration by parts formula \eqref{q-integration-by-parts}
and the hypothesis $\,\displaystyle\lim_{x\to 0^+}f(x)=f(0^+)\,$ one gets,
\begin{equation}  \label{starting-integral}
\begin{array}{l}
\displaystyle\int_{0}^{1}tf(t)J_{\nu }(qj_{k\nu}t;q^{2})d_{q}t= \frac{%
(1-q)q^{\nu-1}f\big(q^{-1}\big)J_{\nu+1}(qj_{k\nu};q^{2})}{j_{k\nu}} -\frac{%
(1-q)q^{\frac{\nu-3}{2}}}{j_{k\nu}}\times \\[1em]
\displaystyle\left[ -\frac{q^{\frac{\nu}{2}}-q^{-\frac{\nu}{2}}}{q^{\frac{1}{%
2}}-q^{-\frac{1}{2}}} \int_{0}^{1}\!J_{\nu+1}(qj_{k\nu}t;q^2)f(t)d_qt+ q^{%
\frac{\nu}{2}}\!\int_{0}^{1}\!J_{\nu+1}(qj_{k\nu}t;q^2)f_2(t)d_qt\right].%
\end{array}%
\end{equation}
where
\begin{equation}  \label{def-f2}
f_2(t):=\frac{t\delta_qf\big(q^{-\frac 1 2}t\big)}{\delta_qt},
\end{equation}
and the operator $\delta_q$ is given in %
\eqref{symmetric-q-difference-operator}. For more details in the previous
calculations see the proof of Theorem 1 \cite[p. 10]{JLC3}.

\; (i) \emph{Step 1}: for the first $\,q$-integral that figures in the right
side of (\ref{starting-integral}): using (3.8) of \cite[Proposition 4, p. 7]%
{JLC3} and again \cite[Lemma 2, p. 5]{JLC3} together with both hypothesis $\,%
\displaystyle\lim_{x\to 0^+}f(x)=f(0^+)\,$ and $\,\nu>0\,$, it results
\begin{equation*}
\int_{0}^{1}\!J_{\nu+1}(qj_{k\nu}t;q^2)f(t)d_qt=\frac{(1-q)q^{\frac{\nu-3}{2}%
}}{j_{k\nu}} \int_{0}^{1}\!J_{\nu}(qj_{k\nu}t;q^2) \frac{t^{-\nu}\delta_q%
\left[t^{\nu}f\big(q^{\frac 1 2}t\big)\right]}{\delta_qt}d_qt\,,
\end{equation*}
which, by (\ref{symmetric-q-derivative-operator}), becomes
\begin{equation}  \label{integral-1}
\begin{array}{l}
\displaystyle\int_{0}^{1}\!J_{\nu+1}(qj_{k\nu}t;q^2)f(t)d_qt= \frac{(1-q)q^{%
\frac{\nu-3}{2}}}{\left(q^{\frac{1}{2}}-q^{-\frac{1}{2}}\right)j_{k\nu}}%
\times \\[1em]
\displaystyle\left[q^{\frac{\nu}{2}}\int_{0}^{1}\!J_{\nu}(qj_{k\nu}t;q^2)%
\frac{f(qt)}{t}d_qt- q^{-\frac{\nu}{2}} \int_{0}^{1}\!J_{\nu}(qj_{k\nu}t;q^2)%
\frac{f(t)}{t}d_qt\right].%
\end{array}%
\end{equation}

\; (i) \emph{Step 2}: in a similar way, for the second $\,q$-integral that
figures in the right side of (\ref{starting-integral}): using the same
hypothesis and a similar procedure used in \emph{Step 1}, together with (\ref%
{def-f2}), we obtain
\begin{equation}  \label{integral-2}
\begin{array}{l}
\displaystyle\int_{0}^{1}\!J_{\nu+1}(qj_{k\nu}t;q^2)f_2(t)d_qt= \frac{%
(1-q)q^{\frac{\nu-3}{2}}}{\left(q^{\frac{1}{2}}-q^{-\frac{1}{2}%
}\right)^2j_{k\nu}}\times \\[1em]
\displaystyle\left[q^{\frac{\nu}{2}}\int_{0}^{1}\!J_{\nu}(qj_{k\nu}t;q^2)%
\frac{f(qt)-f(t)}{t}d_qt- q^{-\frac{\nu}{2}}\int_{0}^{1}\!J_{\nu}(qj_{k%
\nu}t;q^2)\frac{f(t)-f\big(\frac t q\big)}{t}d_qt\right].%
\end{array}%
\end{equation}
The lemma follows by introducing (\ref{integral-1}) and (\ref{integral-2})
into (\ref{starting-integral}) and using identity (\ref{relation-vii}).
\end{proof}

\begin{remark}
Sufficient conditions to guarantee the existence of all the $\,q$-integrals
involved in the previous lemma coincide with the sufficient conditions of
Theorem \ref{uniform-convergence} for the uniform convergence of the $\,q$%
-Fourier-Bessel series.
\end{remark}

We are now in conditions to prove our main result, where sufficient
conditions on the function $\,f\,$ are given in order that its correspondent
$\,q$-Fourier-Bessel series converges uniformly to the function itself in
the set $\,V_{q}^{+}\,=\,\left\{ q^{n}:\,n=0,1,2,\ldots \right\} $.

\textbf{Proof of Theorem \ref{uniform-convergence}}
Under the assumptions on $\,f\,$ we can use Lemma \ref%
{q-integral-of-coef-fourier-new} to obtain
\begin{equation}
\begin{array}{l}
\displaystyle\left\vert \int_{0}^{1}tf(t)J_{\nu }(qj_{k\nu
}t;q^{2})d_{q}t\right\vert \leq \frac{(1-q)q^{\nu -2}\left\vert f\big(\frac{1%
}{q}\big)J_{\nu }(qj_{k\nu };q^{2})\right\vert }{j_{k\nu }^{2}}+\frac{%
(1-q)^{2}q^{\nu -3}}{\left( q^{\frac{1}{2}}-q^{-\frac{1}{2}}\right)
^{2}j_{k\nu }^{2}}\times \\[1em]
\displaystyle\left[ \left\vert q^{\frac{\nu }{2}}-q^{-\frac{\nu }{2}%
}\right\vert \left( q^{\frac{\nu }{2}}\!\left\vert \int_{0}^{1}\!J_{\nu
}(qj_{k\nu }t;q^{2})\frac{f(qt)}{t}d_{q}t\right\vert +q^{-\frac{\nu }{2}%
}\!\left\vert \int_{0}^{1}\!J_{\nu }(qj_{k\nu }t;q^{2})\frac{f(t)}{t}%
d_{q}t\right\vert \right) \right. + \\[1em]
\displaystyle\left. q^{\frac{\nu }{2}}\!\left( \!q^{\frac{\nu }{2}%
}\!\left\vert \int_{0}^{1}\!\!J_{\nu }(qj_{k\nu }t;q^{2})\frac{f(qt)\!-\!f(t)%
}{t}d_{q}t\right\vert +q^{-\frac{\nu }{2}}\!\left\vert
\int_{0}^{1}\!\!J_{\nu }(qj_{k\nu }t;q^{2})\frac{f(t)\!-\!f\big(\frac{t}{q}%
\big)}{t}d_{q}t\right\vert \right) \right] \text{,}%
\end{array}
\label{superior-bound-qintegral}
\end{equation}%
Applying the $\,q$-H\"{o}lder type inequality of \cite[Th. 3.4, p. 346]{CP}
with $\,p=2\,$, we may write, for each of the $\,q$-integrals that figure in
the right side of the previous inequality, respectively,
\begin{equation}
\begin{array}{lll}
\displaystyle\left\vert \int_{0}^{1}J_{\nu }(qj_{k\nu }t;q^{2})\frac{f(qt)}{t%
}d_{q}t\right\vert & \leq & \displaystyle\left( \int_{0}^{1}tJ_{\nu
}^{2}(qj_{k\nu }t;q^{2})d_{q}t\right) ^{\frac{1}{2}}\displaystyle\left(
\int_{0}^{1}\frac{f^{2}(qt)}{t^{3}}d_{q}t\right) ^{\frac{1}{2}} \\[1em]
& \leq & \displaystyle\eta _{k\nu }^{\frac{1}{2}}\left( \int_{0}^{1}\frac{%
f^{2}(qt)}{t^{3}}d_{q}t\right) ^{\frac{1}{2}}\text{,}%
\end{array}
\label{CS-1}
\end{equation}%
\begin{equation}
\left\vert \int_{0}^{1}J_{\nu }(qj_{k\nu }t;q^{2})\frac{f(t)}{t}%
d_{q}t\right\vert \leq \displaystyle\eta _{k\nu }^{\frac{1}{2}}\left(
\int_{0}^{1}\frac{f^{2}(t)}{t^{3}}d_{q}t\right) ^{\frac{1}{2}},  \label{CS-2}
\end{equation}%
\begin{equation}
\left\vert \int_{0}^{1}J_{\nu }(qj_{k\nu }t;q^{2})\frac{f(qt)-f(t)}{t}%
d_{q}t\right\vert \leq \displaystyle\eta _{k\nu }^{\frac{1}{2}}\left(
\int_{0}^{1}\frac{\left( f(qt)-f(t)\right) ^{2}}{t^{3}}d_{q}t\right) ^{\frac{%
1}{2}},  \label{CS-3}
\end{equation}%
\begin{equation}
\left\vert \int_{0}^{1}J_{\nu }(qj_{k\nu }t;q^{2})\frac{f(t)-f\big(\frac{t}{q%
}\big)}{t}d_{q}t\right\vert \leq \displaystyle\eta _{k\nu }^{\frac{1}{2}%
}\left( \int_{0}^{1}\frac{\left( f(t)-f\big(\frac{t}{q}\big)\right) ^{2}}{%
t^{3}}d_{q}t\right) ^{\frac{1}{2}}.  \label{CS-4}
\end{equation}%
Now, using the definition (\ref{qintegral}), since $t^{-\frac{3}{2}}f(t)\in
L_{q}^{2}[0,1]$,
\begin{equation}
\int_{0}^{1}\frac{f^{2}(qt)}{t^{3}}d_{q}t=(1-q)\sum_{n=0}^{\infty }\frac{%
q^{n}f^{2}(q^{n+1})}{q^{3n}}=(1-q)q^{2}\sum_{n=0}^{\infty }\left( \frac{%
f(q^{n+1})}{q^{n+1}}\right) ^{2}=S<+\infty \text{,}  \label{L2-1}
\end{equation}%
and
\begin{equation}
\int_{0}^{1}\frac{f^{2}(t)}{t^{3}}d_{q}t=(1-q)\sum_{n=0}^{\infty }\left(
\frac{f(q^{n})}{q^{n}}\right) ^{2}=T<+\infty \,\text{.}  \label{L2-2}
\end{equation}%
Moreover, since $\,f\,$ is $\,q$-linear H\"{o}lder of order $\,\alpha >1\,$
in $\,V_{q}^{+}\cup \left\{ q^{-1}\right\} \,$, there exist $\,M\,$ and $%
\,N\,$ such that
\begin{equation}
\begin{array}{lll}
\displaystyle\int_{0}^{1}\frac{\left( f(qt)-f(t)\right) ^{2}}{t^{3}}d_{q}t &
= & \displaystyle(1-q)\sum_{n=0}^{\infty }\frac{\Big(f\big(q^{n+1}\big)-f%
\big(q^{n}\big)\Big)^{2}}{q^{2n}} \\[1em]
& \leq & \displaystyle(1-q)M^{2}\sum_{n=0}^{\infty }\frac{q^{2\alpha n}}{%
q^{2n}}=\frac{(1-q)M^{2}}{1-q^{2(\alpha -1)}},%
\end{array}
\label{H-1}
\end{equation}%
and
\begin{equation}
\begin{array}{lll}
\displaystyle\int_{0}^{1}\frac{\left( f(t)-f\big(\frac{t}{q}\big)\right) ^{2}%
}{t^{3}}d_{q}t & = & \displaystyle(1-q)\sum_{n=0}^{\infty }\frac{\Big(f\big(%
q^{n}\big)-f\big(q^{n-1}\big)\Big)^{2}}{q^{2n}} \\[1em]
& \leq & \displaystyle(1-q)N^{2}\sum_{n=0}^{\infty }\frac{q^{2\alpha n}}{%
q^{2n}}=\displaystyle\frac{(1-q)N^{2}}{1-q^{2(\alpha -1)}}.%
\end{array}
\label{H-2}
\end{equation}%
The constants $\,S\equiv S_{q}(f)\,$, $\,T\equiv T_{q}(f)\,$ $\,M\equiv
M_{q}(f)\,$ and $\,N\equiv N_{q}(f)\,$ are independent of $\,k$. Introducing
inequalities (\ref{L2-1}), (\ref{L2-2}), (\ref{H-1}) and (\ref{H-2}) into
inequalities (\ref{CS-1}), (\ref{CS-2}), (\ref{CS-3}) and (\ref{CS-4}),
respectively, and the resulting ones into (\ref{superior-bound-qintegral}),
gives:
\begin{equation*}
\begin{array}{l}
\displaystyle\left\vert \int_{0}^{1}tf(t)J_{\nu }(qj_{k\nu
}t;q^{2})d_{q}t\right\vert \leq \frac{(1-q)q^{\nu -2}\left\vert f\big(\frac{1%
}{q}\big)J_{\nu }(qj_{k\nu };q^{2})\right\vert }{j_{k\nu }^{2}}+\frac{%
(1-q)^{2}q^{\nu -3}\eta _{k}^{\frac{1}{2}}}{\left( q^{\frac{1}{2}}-q^{-\frac{%
1}{2}}\right) ^{2}j_{k\nu }^{2}}\times \\[1em]
\displaystyle\left[ \left\vert q^{\frac{\nu }{2}}-q^{-\frac{\nu }{2}%
}\right\vert \left( q^{\frac{\nu }{2}}\sqrt{S}+q^{-\frac{\nu }{2}}\sqrt{T}%
\right) +\displaystyle q^{\frac{\nu }{2}}\left( q^{\frac{\nu }{2}}\sqrt{%
\frac{(1-q)M^{2}}{1-q^{2(\alpha -1)}}}+q^{-\frac{\nu }{2}}\sqrt{\frac{%
(1-q)N^{2}}{1-q^{2(\alpha -1)}}}\right) \!\right] ,%
\end{array}%
\end{equation*}%
or, equivalently,
\begin{equation*}
\left\vert \int_{0}^{1}tf(t)J_{\nu }\big(qj_{k\nu }t;q^{2}\big)%
d_{q}t\right\vert \leq \frac{1}{j_{k\nu }^{2}}\left\{ C_{1}\left\vert J_{\nu
}\big(qj_{k\nu };q^{2}\big)\right\vert +C_{2}\left( \eta _{k,\nu }\right) ^{%
\frac{1}{2}}\right\} ,
\end{equation*}%
where $\,C_{1}\,$ and $\,C_{2}\,$ depend on $\,f\,$, $\,\nu \,$ and $\,q\,$
but are independent of $\,k$. Thus, the absolute value of the $\,k^{th}\,$
term of the infinite sum (\ref{q-series}) verifies the following inequality
\begin{equation*}
\begin{array}{lll}
\displaystyle\left\vert a_{k}\left( f\right) J_{\nu }\big(q^{n+1}j_{k\nu
};q^{2}\big)\right\vert & = & \displaystyle\ \left\vert \left( \frac{1}{\eta
_{k,\nu }}\int_{0}^{1}tf(t)J_{\nu }\big(qj_{k\nu }t;q^{2}\big)d_{q}t\right)
J_{\nu }\big(q^{n+1}j_{k\nu };q^{2}\big)\right\vert \\[1em]
& \leq & \displaystyle\ \left\{ \frac{C_{1}\left\vert J_{\nu }\big(qj_{k\nu
};q^{2}\big)\right\vert }{j_{k\nu }^{2}\eta _{k,\nu }}+\frac{C_{2}}{j_{k\nu
}^{2}\eta _{k,\nu }^{\frac{1}{2}}}\right\} \left\vert J_{\nu }\big(%
q^{1+n}j_{k\nu };q^{2}\big)\right\vert .%
\end{array}%
\end{equation*}%
Therefore, by (\ref{eta}), $\left\vert a_{k}\left( f\right) J_{\nu }\big(%
q^{n+1}j_{k\nu };q^{2}\big)\right\vert \;$ is bounded by
\begin{equation}
\left\{ \frac{qC_{1}}{j_{k\nu }\left\vert J_{\nu }^{\prime }\big(j_{k\nu
};q^{2}\big)\right\vert }\!+\!\frac{q^{\frac{1}{2}}C_{2}}{j_{k\nu }^{\frac{3%
}{2}}\left\vert J_{\nu }^{\prime }\big(j_{k\nu };q^{2}\big)\right\vert ^{%
\frac{1}{2}}\left\vert J_{\nu }\big(qj_{k\nu };q^{2}\big)\right\vert ^{\frac{%
1}{2}}}\right\} \!\left\vert J_{\nu }\big(q^{1+n}j_{k\nu };q^{2}\big)%
\right\vert .  \label{con-uni-two}
\end{equation}%
For the first term, by using Theorem A and Lemmas \ref%
{uniform-bounded-sequence} and \ref{derivative} it follows that there exists
a constant $M>0$, independent of $k$, such that
\begin{equation*}
\frac{\left\vert J_{\nu }\big(q^{1+n}j_{k\nu };q^{2}\big)\right\vert }{%
j_{k\nu }\left\vert J_{\nu }^{\prime }\big(j_{k\nu };q^{2}\big)\right\vert }%
\leq Mq^{k}.
\end{equation*}%
Now the second term. By Proposition \ref{prop-inicial},
\begin{equation*}
\frac{\left\vert J_{\nu }\big(q^{1+n}j_{k\nu };q^{2}\big)\right\vert }{%
j_{k\nu }^{\frac{3}{2}}\left\vert J_{\nu }^{\prime }\big(j_{k\nu };q^{2}\big)%
\right\vert ^{\frac{1}{2}}\left\vert J_{\nu }\big(qj_{k\nu };q^{2}\big)%
\right\vert ^{\frac{1}{2}}}=\frac{\left\vert J_{\nu }\big(qj_{k\nu };q^{2}%
\big)\right\vert ^{\frac{1}{2}}}{j_{k\nu }^{\frac{3}{2}}\left\vert J_{\nu
}^{\prime }\big(j_{k\nu };q^{2}\big)\right\vert ^{\frac{1}{2}}}\left\vert
P_{n}\big(j_{k\nu }^{2};q\big)\right\vert .
\end{equation*}%
Using the expression \eqref{notation} as well as Proposition \ref%
{newrelation} and Eq. \eqref{recurrence-coefficientsnn} it follows that, for
all given $n$,
\begin{equation*}
\lim_{k\rightarrow \infty }\left\vert \frac{P_{n}\big(j_{k\nu }^{2};q\big)}{%
a_{n}^{(n,\nu )}(j_{k\nu }^{2})^{n}}\right\vert =1,\quad \mbox{and so,}\quad
P_{n}\big(j_{k\nu }^{2};q\big)=\mathcal{O}\left( q^{n(n+1-\nu )}j_{k\nu
}^{2n}\right) \quad \mbox{as}\quad k\rightarrow \infty .
\end{equation*}%
Substituting the above expression and combining Theorems A and C with Lemmas %
\ref{uniform-bounded-sequence} and \ref{derivative}, leads to the following
bound for the second term in \eqref{con-uni-two}:
\begin{equation*}
Aq^{k^{2}+\nu k+n(n+1-\nu )-2kn}=Aq^{\left( n-k-\frac{\nu -1}{2}\right) ^{2}-%
\frac{(\nu -1)^{2}}{4}}q^{k}\leq Bq^{k},
\end{equation*}%
where we have used $\epsilon _{k}^{(\nu )}(q)>0$ and $\epsilon _{k}^{(\nu
)}(q)=\mathcal{O}(q^{2k})$ when $k\rightarrow \infty $ (see %
\eqref{boundasymptotic} and \eqref{zerosbound1}). The constants $A$ and $B$
are positive and independent of $k$. Therefore,
\begin{equation*}
\left\vert a_{k}\left( f\right) J_{\nu }\big(q^{n+1}j_{k\nu };q^{2}\big)%
\right\vert =\mathcal{O}\left( q^{{k}}\right) \quad \mbox{as}\quad
k\rightarrow \infty ,
\end{equation*}%
which proves the uniform convergence of the basic Fourier-Bessel series (\ref%
{fourier}) on the set $\,V_{q}^{+}\,$.

\begin{remark}
We should point out that the $\,q$-linear H\"{o}lder condition does not need
to be strictly accomplish for all $\,n=0,1,2,\cdots \,.$ It suffices that $%
\,f\,$ satisfies the \emph{almost} $\,q$-linear H\"{o}lder condition \cite[%
p. 105]{JLC2}, which means that the condition only needs to be satisfied for
all integers $\,n\,$ such that $\,n\geq n_{0}\,$, where $\,n_{0}\,$ is a
positive integer.
\end{remark}

The next result is a corollary of Theorem B and Theorem \ref%
{uniform-convergence}.

\begin{theorem}
\label{FinalTheorem} If $\;f\in L_q^2[0,1]\,$ and the $\,q$-Fourier-Bessel
series $\,S_q^{(\nu)}[f](x)\,$ converges uniformly on $\,\displaystyle%
\,V_q^+=\left\{q^n:\:n=0,1,2,\ldots\,\right\}$, then its sum is $\,f(x)\,$
whenever $\,x\in V_q^+\,.$
\end{theorem}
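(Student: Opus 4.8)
The plan is to deduce this statement by combining the pointwise convergence that is already available with the elementary fact that a uniformly convergent sequence of functions converges pointwise to the same limit. The hypothesis $f\in L_q^2[0,1]$ is exactly what is needed in order to identify that pointwise limit as $f$ itself, and the uniform convergence hypothesis then does the rest for free.

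First I would record that $S_q^{(\nu)}[f]$ is nothing but the orthonormal Fourier expansion of $f$ in the complete system $\{u_k^{(\nu)}\}$ of Theorem B, once the normalization is taken into account: since $\eta_{k,\nu}=\big\|t^{\frac12}J_\nu(qj_{k\nu}t;q^2)\big\|_{L_q^2[0,1]}^2$ by \eqref{eta}, one has $\langle f,u_k^{(\nu)}\rangle\,u_k^{(\nu)}(x)=a_k^{(\nu)}(f)\,x^{\frac12}J_\nu(qj_{k\nu}x;q^2)$ with $a_k^{(\nu)}(f)$ as in \eqref{cfourier}. Because $f\in L_q^2[0,1]$ and $\{u_k^{(\nu)}\}$ is complete in $L_q^2[0,1]$ by Theorem B, the partial sums of this expansion converge to $f$ in the $L_q^2[0,1]$ norm, and then Lemma \ref{juan} upgrades this to pointwise convergence at every point of $V_q^+$, i.e.\ $\displaystyle\lim_{N\to\infty}\sum_{k=1}^N a_k^{(\nu)}(f)\,J_\nu(q^{m+1}j_{k\nu};q^2)=f(q^m)$ for every $m=0,1,2,\dots$. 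This is precisely the pointwise convergence theorem stated in Section 5.

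Now suppose in addition that $S_q^{(\nu)}[f]$ converges uniformly on $V_q^+$, say to a function $g\colon V_q^+\to\mathbb{C}$. Uniform convergence on $V_q^+$ forces, in particular, pointwise convergence at each point $q^m\in V_q^+$; hence $g(q^m)=\lim_{N\to\infty}\sum_{k=1}^N a_k^{(\nu)}(f)\,J_\nu(q^{m+1}j_{k\nu};q^2)$, which by the previous paragraph equals $f(q^m)$. Since this holds for every $m$, we conclude $g=f$ on $V_q^+$, which is the assertion.

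There is no serious obstacle here; the only points that require a line of care are the bookkeeping that matches $S_q^{(\nu)}[f]$ with the abstract Hilbert-space expansion through the factor $\eta_{k,\nu}$, and the explicit appeal to Lemma \ref{juan} to pass from $L_q^2[0,1]$-convergence to pointwise convergence on the discrete set $V_q^+$. It is worth noting that Theorem \ref{uniform-convergence} is not logically needed in the proof itself; its role is to furnish concrete classes of functions $f$ for which the uniform-convergence hypothesis is actually met, so that the two results together yield genuine uniform convergence of $S_q^{(\nu)}[f]$ to $f$ on $V_q^+$.
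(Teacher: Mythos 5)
Your proof is correct, but it follows a genuinely different route from the paper's. The paper never invokes the pointwise convergence theorem of Section 5: it names the uniform limit $g$, multiplies the identity $\sum_k a_k^{(\nu)}(f)\,J_\nu(qj_{k\nu}x;q^2)=g(x)$ by $xJ_\nu(qj_{k\nu}x;q^2)$, integrates term by term (this is where the uniform convergence hypothesis is actually consumed, to justify interchanging the sum with $\int_0^1\cdot\,d_qt$), and deduces from the orthogonality relation (\ref{ort}) that $a_k^{(\nu)}(g)=a_k^{(\nu)}(f)$ for every $k$; the completeness statement of Theorem B applied to $h=f-g$ then forces $h(q^k)=0$. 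You instead observe that $f\in L_q^2[0,1]$ alone already makes (\ref{fourier}) converge pointwise to $f$ on $V_q^+$ (the Section 5 theorem, itself resting on Lemma \ref{juan} and Theorem B), and that a uniformly convergent series converges pointwise to the same limit, so the uniform sum can only be $f$. Both arguments are sound; yours is shorter and makes transparent that the uniform-convergence hypothesis plays no role in identifying the limit, only in upgrading the mode of convergence, while the paper's is self-contained modulo orthogonality and completeness and does not lean on Section 5, which the authors present somewhat as an aside. One small bookkeeping slip: your identity $\langle f,u_k^{(\nu)}\rangle u_k^{(\nu)}(x)=a_k^{(\nu)}(f)\,x^{1/2}J_\nu(qj_{k\nu}x;q^2)$ with $a_k^{(\nu)}(f)$ as in (\ref{cfourier}) is off by a factor $t^{1/2}$ in the integrand; the series (\ref{fourier}) is the orthonormal expansion of $t^{1/2}f(t)$ divided by $x^{1/2}$, not of $f$ itself. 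Since $t^{1/2}f\in L_q^2[0,1]$ whenever $f\in L_q^2[0,1]$, this is harmless, and in any case you may simply cite the Section 5 theorem, which is stated directly for (\ref{fourier}).
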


\begin{proof}
Let $\,g(x)\,$ denote the sum of $\,S_q^{(\nu)}[f](x)\,,\,$ the $\,q$%
-Fourier series of the function $\,f\,$ given by (\ref{fourier})-(\ref{eta}%
):
\begin{equation}
\sum_{k=1}^{\infty }a_{k}^{(\nu)}\left(f\right) J_{\nu }(qj_{k\nu
}x;q^{2})=g(x)\,.  \label{Novo-2006}
\end{equation}%
Multiplying both sides of (\ref{Novo-2006}) by $\,xJ_{\nu }\big(qj_{k\nu }x;q%
\big)\,,\;k\geq 1\,,\,$ using the uniform convergence and integrating
termwise, it results, by the orthogonality relations (\ref{ort}),
\begin{equation*}
a_{k}^{(\nu)}\left( f\right)= \frac{1}{\eta_{k,\nu}}\int_{0}^{1}tg(t)J_{\nu}%
\big(qj_{k\nu}t;q^{2}\big)d_{q}t\,.
\end{equation*}
Hence,
\begin{equation*}
a_{k}^{(\nu)}(f)=a_{k}^{(\nu)}(g)\,,
\end{equation*}
so, applying in $\,L_{q}^{2}[0,1]\,$ the completeness Theorem B to the
function $\,h(x)=f(x)-g(x)\,,$ we may conclude that $\:\displaystyle h\left(
q^{k}\right)=0\,,\;k=0,1,2,\ldots\,$ which means that
\begin{equation*}
f\left(q^k\right)=g\left(q^k\right)\,,\;\: k=0,1,2,\ldots\,.
\end{equation*}
\end{proof}

\section{Examples}

We conclude with some explicit examples of uniformly convergent
Fourier-Bessel series on a $q$-linear grid.

\begin{example}
Consider $\,f(x):=x^{\nu }\,$. Using the power series expansion of $\,J_{\nu
}(x;q^{2})\,$ and the definition of the $q$-integral, a calculation shows
that 
\begin{equation*}
\int_{0}^{1}t^{\nu +1}J_{\nu }(qj_{k\nu }t;q^{2})d_{q}t=\frac{1-q}{qj_{k\nu }%
}J_{\nu +1}(qj_{k\nu };q^{2})
\end{equation*}%
and (\ref{cfourier})-(\ref{eta}) gives
\begin{equation*}
a_{n}(x^{\nu })=-\frac{2}{q^{\nu }j_{k\nu }}\frac{1}{J_{\nu }^{\prime
}(j_{k\nu };q^{2})}\,.
\end{equation*}%
It is trivial to check that the function $\,f(x)=x^{\nu }\,$ is $\,q$-linear
H\"{o}lder of order $\,\nu \,$
and, if $\,\nu >1\,$,
\begin{equation*}
x^{-\frac{3}{2}}f(x)=x^{\nu -\frac{3}{2}}\in L_{q}^{2}[0,1]
\end{equation*}%
and
\begin{equation*}
\lim_{x\rightarrow 0^{+}}x^{\nu }=0\,,
\end{equation*}%
thus, by Theorem \ref{uniform-convergence}, we may conclude that the $q$%
-Fourier-Bessel series $\,S_{q}^{(\nu )}\big[x^{\nu }\big]\,$ converges
uniformly on $\,\displaystyle\,V_{q}^{+}=\left\{ q^{n}:\,n=0,1,2,\ldots
\,\right\} \,$ whenever $\,\nu >1\,,$ hence, under this restriction, by
Theorem \ref{FinalTheorem},
\begin{equation*}
x^{\nu }=-2\,q^{-\nu }\sum_{k=1}^{\infty }\frac{J_{\nu }(qj_{k\nu }x;q^{2})}{%
j_{k\nu }J_{\nu }^{\prime }(j_{k\nu };q^{2})},
\end{equation*}%
for all $\,x=q^{n}\,,$ $\,n=0,1,2,\ldots \,.$

\vspace{1em} \noindent The convergence of the expansion of $\,x^{\nu }\,$ in
the classical Fourier-Bessel series is studied in \cite[18.22]{Watson} using
contour integral methods.
\end{example}

\vspace{0.7em}

\begin{example}
Consider $\displaystyle g_{\nu,\mu}(x;q)\equiv g(x;q)\!:=\!x^{\nu} \frac{%
(x^{2}q^{2};q^{2})_{\infty}}{(x^{2}q^{2\mu-2\nu};q^{2})_{\infty}}$, with $%
\,|x|<1\,$ and $\,\mu>\nu>-\frac 1 2$.

\noindent Using the $q$-binomial theorem \cite[(1.3.2)]{GR}\ we have
\begin{equation*}
g(x;q)=\frac{x^{\nu }}{1-x^{2}}\left[\sum_{n=0}^{\infty } \frac{%
(q^{2\mu-2\nu };q^{2})_{n}}{(q^{2};q^{2})_{n}}\,x^{2n}\right]^{-1}.
\end{equation*}
Since
\begin{equation*}
\lim_{q\rightarrow 1^{-}}\sum_{n=0}^{\infty }\frac{(q^{2\mu -2\nu};q^{2})_{n}%
}{(q^{2};q^{2})_{n}}\,x^{2n}= \sum_{n=0}^{\infty }\frac{(2\mu -2\nu)_{n}}{n!}%
\,x^{2n}=(1-x^{2})^{-2\mu +2\nu }\text{,}
\end{equation*}%
it becomes clear that $\,g(x;q)\,$ is a $\,q$-analogue of $%
\,g(x)=x^{\nu}(1-x^{2})^{2\mu -2\nu -1}\,$.

We can expand $\,g(x;q)\,$ in uniform convergent $\,q$-Fourier-Bessel
series. Setting $\,x=qj_{k\nu }\,$ in formula (4.11) from \cite{A1}, it is
easy to see, following (\ref{cfourier}), that
\begin{equation*}
\int_{0}^{1}t\,g(t;q)J_{\nu }(qj_{k\nu }t;q^{2})d_{q}t=(1-q)(qj_{k\nu
})^{\nu-\mu} \frac{(q^{2};q^{2})_{\infty }}{(q^{2\mu -2\nu };q^{2})_{\infty}}%
J_{\mu }(qj_{k\nu };q^{2})\,,
\end{equation*}
therefore, (\ref{cfourier})-(\ref{eta}) enables one to write
\begin{equation*}  \label{CoeficienteExemplo2}
a_{k}^{(\nu)}\big(g(x;q)\big)=-2q^{1-\mu}\big(j_{k\nu}\big)^{\nu-\mu} \frac{%
(q^{2};q^{2})_{\infty}}{(q^{2\mu-2\nu};q^{2})_{\infty}} \frac{J_{\mu}\big(%
qj_{k\nu};q^{2}\big)}{J_{\nu+1}\big(qj_{k\nu};q^{2}\big) J_{\nu}^{\prime}%
\big(j_{k\nu};q^{2}\big)}\,.
\end{equation*}
It is easy to check that $\,g(x;q)\,$ is $\,q$-linear H\"{o}lder of order $%
\,\nu+2\,$ and, if $\,\nu>1\,$, $x^{-\frac 3 2}g(x;q)\in L_q^2[0,1]$ and $%
\lim_{x\to 0^+}g(x;q)=0$, thus, by Theorem \ref{uniform-convergence}, we may
conclude that the $q$-Fourier series $\,S_{q}^{(\nu)}\big[g(x;q)\big]\,$
converges uniformly on $\,\displaystyle
\,V_{q}^{+}=\left\{ q^{n}:\,n=0,1,2,\ldots \,\right\}\,$ whenever $\,\nu>1\,$%
, hence, by Theorem \ref{FinalTheorem},
\begin{equation*}
x^{\nu }\frac{(x^{2}q^{2};q^{2})_{\infty}}{(x^{2}q^{2(\mu-\nu)};q^{2})_{%
\infty}}= -\frac{2\,q^{1-\mu}\big(q^2;q^2\big)_{\infty}}{\big(%
q^{2(\mu-\nu)};q^2\big)_{\infty}} \sum_{k=1}^{\infty }\frac{%
(j_{k\nu})^{\nu-\mu}J_{\mu}\big(qj_{k\nu};q^{2}\big)J_{\nu}\big(%
qj_{k\nu}x;q^{2}\big)}{ J_{\nu+1}\big(qj_{k\nu};q^{2}\big)%
J_{\nu}^{\prime}(j_{k\nu };q^{2})}\,.
\end{equation*}
We note that choosing $\,\mu=\nu+1$ in the latter example one obtains the
first one.
\end{example}

\subsection*{Acknowledgements}

The authors are very grateful to Prof. Juan Arias for pointing out the Lemma %
\ref{juan}. Also, stimulating discussions with Prof. Jos\'{e} Carlos
Petronilho from the University of Coimbra are kindly acknowledged. The
research of L. D. Abreu was supported by Austrian Science Foundation (FWF)
START-project FLAME (\textquotedblleft Frames and Linear Operators for
Acoustical Modeling and Parameter Estimation\textquotedblright , Y 551-N13).
The research of R. \'{A}lvarez-Nodarse was partially supported by
MTM2015-65888-C4-1-P (Ministerio de Econom\'{\i}a y Competitividad),
FQM-262, FQM-7276 (Junta de Andaluc\'{\i}a) and Feder Funds (European
Union). The research of J. L. Cardoso was supported by the Portuguese
Government through the FCT - Funda\c{c}\~{a}o para a Ci\^{e}ncia e a
Tecnologia - under the project PEst-OE/MAT/UI4080/2014.

\bigskip

\end{document}